\documentclass[a4paper]{amsart}
\usepackage{amsfonts,amsmath,amssymb,amscd,amstext,amsbsy,latexsym}
\vfuzz2pt 
\hfuzz2pt 
\newtheorem{thm}{Theorem}[section]
\newtheorem{cor}[thm]{Corollary}

\newtheorem{prop}[thm]{Proposition}
\theoremstyle{definition}
\newtheorem{defn}[thm]{Definition}
\theoremstyle{remark}
\newtheorem{rem}[thm]{Remark}
\newtheorem{exm}[thm]{Example}
\numberwithin{equation}{section}

\newcommand{\Z}{{\bf Z}}

\setlength{\topmargin}{0cm}\setlength{\oddsidemargin}{0cm}\setlength{\evensidemargin}{0cm}

\setlength{\headheight}{.25cm}\setlength{\headsep}{.25cm}

\setlength{\textwidth}{16cm}\setlength{\textheight}{24.2cm}

\setlength{\parindent}{0cm}\setlength{\parskip}{0cm}

\begin{document}

\title{maximal subrings up to isomorphism of fields}%
\author{Alborz Azarang, Nasrin Parsa}
\keywords{Maximal subring, ring homomorphism}%
\subjclass[2010]{13B02, 13B21, 13B30, 13A05, 13A15, 13A18}%
\maketitle

\centerline{Department of Mathematics, Faculty of Mathematical Sciences and Computer,}
\centerline{Shahid Chamran University of Ahvaz, Ahvaz-Iran}
\centerline{a${}_{-}$azarang@scu.ac.ir, nasrin-parsa@stu.scu.ac.ir}

\begin{abstract}
In this paper we study maximal subrings up to isomorphism of fields. It is shown that each field with zero characteristic has infinitely many maximal subrings up to isomorphism. If $K$ is an algebraically closed field and $x$ is an indeterminate over $K$, then we prove that integrally closed maximal subrings of $K(x)$ which contains $K$ are all isomorphic. In particular, if $K$ is an absolutely algebraic field, then $K(x)$ has only finitely many integrally closed maximal subrings up to isomorphism if and only if $K$ is algebraically closed. Also, we show that if $K$ is an absolutely algebraic field then $K$ has only finitely many maximal subrings up to isomorphism if and only if $K$ has only finitely many maximal subrings. We prove that if a commutative ring $R$ with zero characteristic has only finitely many maximal subrings up to isomorphism, then $U(R)\cap\mathbb{P}$ is finite, where $\mathbb{P}$ is the set of natural prime numbers. In particular, if $R$ is a commutative ring with zero characteristic and $Char(R/J(R))\neq 0$, then $R$ has infinitely many maximal subrings up to isomorphism. Maximal subrings up to isomorphism of $K[x]$, $K\times K$ and $K[x]/(x^2)$ for a field $K$ are investigated. If $R$ is a non-field maximal subring of a field $K$ and $\mathcal{A}$ is the set of all maximal subrings of $K$ which are isomorphic to $R$, then we prove that $\mathcal{A}=\{\sigma(R)\ |\ \sigma\in Aut(K)|\}$, in particular $|\mathcal{A}|\leq |Aut(K)|$. Moreover if $K$ is infinite and $|Aut(K)|<|K|$, then $R$ has at least $|K|$-many integrally closed maximal subrings up to isomorphism.
\end{abstract}

\section{Introduction}
\subsection{Motivation.} Ferrand and Olivier characterized minimal ring extension up to isomorphism of fields in \cite{frd}. In fact they proved that if $K$ is a field, then the minimal ring extensions of $K$ up to $K$-algebra isomorphism are either a field $E$ (where $E/K$ is a minimal finite field extension) or $K\times K$ or $K[x]/(x^2)$. Recently, Dobbs and Shapiro generalized the latter fact for integral domains and certain commutative rings too, see \cite{dobsid} and \cite{dobscr}. More interestingly, Dobbs in \cite{dobsiso}, proved that for each cardinal number $\mathfrak{c}\geq 2$, there exists a commutative ring $R$ which has exactly $\mathfrak{c}$ minimal ring extension up to $R$-algebra isomorphism. Also he proved that if $\mathfrak{c}$ is a cardinal number which is either $\leq \aleph_0$ or is of the form $2^{\alpha}$ (for some cardinal number $\alpha$), then there exists a field $E$ with exactly $\mathfrak{c}$ maximal subrings. In \cite{azarang6}, the first author, exactly determine fields with only finitely many maximal subrings (see also \cite[Theorem 4.13]{azarang4}) and also affine integral domain over a given field. Specially for a field $K$, maximal subrings of $K[x]/(x^2)$ are exactly determined in \cite[Theorem 4.10]{azarang6} (i.e., the converse of the last part of the result of Ferrand and Olivier mentioned in the above). More generally, in \cite{azomn}, Azarang and Oman studied commutative rings with infinitely many maximal subrings. In particular, they proved that either a ring $R$ has infinitely many maximal subrings or $R$ is a Hilbert ring. They also characterized exactly maximal subrings of $K\times K$, where $K$ is a field, see \cite[Theorem 3.4]{azomn} (i.e., the converse of the second part of the result of Ferrand and Olivier mentioned in the above). It is not hard to see that for each prime number $p$ and natural number $n\geq 2$, if $\mathbb{F}_p$ is a field with $p$ elements, then all of maximal subrings of the ring $R=\prod_{i=1}^n \mathbb{F}_p$, are all isomorphic, i.e., $R$ has a unique maximal subring up to isomorphism, see the proof of \cite[Proposition 3.5]{azkarm3}. In this article motivated by the above facts we are interested to study maximal subrings  up to isomorphism of fields and certain commutative rings.\\

All rings in this note are commutative with $1\neq 0$. All subrings, ring extensions, homomorphisms are unital. A proper subring $S$ of a ring $R$ is called a maximal subring if $S$ is maximal with respect to inclusion in the set of all proper subrings of $R$. Not every ring possesses maximal subrings (for example the algebraic closure of a finite field has no maximal subrings, see \cite[Remark 1.13]{azkrm}; also see \cite[Example 2.6]{azkrm2} and \cite[Example 3.19]{azkarm4} for more examples of rings which have no maximal subrings). A ring which possesses a maximal subring is said to be submaximal, see \cite{azarang3}, \cite{azkrm}  and \cite{azkarm4}. If $S$ is a maximal subring of a ring $R$, then the extension $S\subseteq R$ is called a minimal ring extension (see \cite{frd}) or an adjacent extension too (see \cite{adjex}). Minimal ring extensions first appears in \cite{gilmer}, for studying integral domains with a unique minimal overring. Next in \cite{frd}, these extensions are fully studied and some essential facts are obtained.\\

A brief outline of this paper is as follow. First in Section 2, we give some preliminaries about the structure of absolutely algebraic fields from \cite{bra}, \cite{azkrm} and \cite{azarang6} which are needed in this paper. Specially we give the definition of Steinitz numbers (or supper natural numbers) and their correspond to the structure of subfields of algebraic closure of finite fields. In Section 3, we investigate about maximal subrings up to isomorphism in fields and also in certain commutative rings. First we prove that if $E$ is a field with zero characteristic, then $E$ has infinitely many maximal subrings up to isomorphism. Next we characterize absolutely algebraic field with only finitely many maximal subrings up to isomorphism. In fact we prove that an absolutely algebraic field has only finitely many maximal subrings up to isomorphism if and only if it has only finitely many maximal subrings. We prove that for an absolutely algebraic field $K$, the field $K(x)$ has only finitely many integrally closed maximal subrings up to isomorphism if and only if $K$ is algebraically closed. We also show that if $K$ is an absolutely algebraic field and $X$ be a set of indeterminate variables over $K$ where $|X|\geq 2$ and $K(X)$ has only finitely many integrally closed maximal subrings up to isomorphism, then $K$ is algebraically closed. We prove that if $K$ is an absolutely algebraic field with infinitely many maximal subrings, then $K(x)$ has infinitely many maximal subrings up to isomorphism. For a field $K$, we show that all maximal subrings of $K(x)$ which are fields and contains $K$ are all isomorphic to $K(x)$. In particular, $K$ is algebraically closed if and only if maximal subrings of $K(x)$ which contains $K$ are $K$-isomorphic to either $K(x)$ or $K[x]_{(x)}$. We investigate about the number of maximal subrings of a ring $R$ up to $D$-algebra isomorphism, where $D$ is a certain (UFD) subring of $R$. In fact, we prove that if $\alpha=|U(R)\cap Irr(D)|$, then $R$ has at least $\alpha$ maximal subrings up to $D$-algebra isomorphism. In particular, if $K$ is a field which is not absolutely algebraic, then there exists a UFD subring $D$ of $K$ which $K$ is algebraic over $D$, $|Irr(D)|=|K|$ and $K$ has at least $|K|$-many maximal subrings up to $D$-algebra isomorphism. Consequently, we prove that if $R$ is a commutative ring with zero characteristic, then either $R$ has infinitely many maximal subrings up to isomorphism or $U(R)\cap \mathbb{P}$ is finite. In particular, each commutative ring which either contains $\mathbb{Q}$ as a subring or $Char(R/J(R))\neq 0$, has infinitely many maximal subrings up to isomorphism. If $K$ is an algebraically closed field, then we prove that $K[x]$ has finitely many maximal subrings up to isomorphism which contains $K$. Also we show that if $K$ is a field, then $K$ has only finitely many maximal subrings up to isomorphism if and only if $K\times K$ has only finitely many maximal subrings up to isomorphism. We prove that if $K$ is a field each of maximal subring of $K$ is not a field and $K$ has only finitely many maximal subrings up to isomorphism, then $K[x]/(x^2)$ has only finitely many maximal subrings up to isomorphism. If $K$ is a field, then we prove that two non-field maximal subrings $R$ and $S$ of $K$ are isomorphic if and only if there exists $\sigma\in Aut(K)$ such that $\sigma(R)=S$. In particular, if $K$ is an infinite field with $|Aut(K)|<|K|$, then $K$ has at least $|K|$-many non-isomorphic integrally closed maximal subrings.

\subsection{Notation.} Finally, let us recall some notation and definitions. As usual, let $Char(R)$, $U(R)$ and $J(R)$ denote the characteristic, the set of all units and the Jacobson radical ideal of a ring $R$, respectively. For any ring $R$, let $\Z=\mathbb{Z}\cdot 1_R=\{n\cdot 1_R\ |\ n\in \mathbb{Z} \}$, be the prime subring of $R$. We denote the finite field with $p^n$ elements, where $p$ is prime and $n\in\mathbb{N}$, by $\mathbb{F}_{p^n}$. Fields which are algebraic over $\mathbb{F}_p$ for some prime number $p$, are called absolutely algebraic fields. If $R$ is a ring and $a\in R\setminus N(R)$, then $R_a$ denotes the ring of quotient of $R$ respect to the multiplicatively closed set $\mathcal{C}=\{1,a,a^2,\ldots, a^n,\ldots\}$. If $D$ is an integral domain, then we denote the set of all non-associate irreducible elements of $D$ by $Irr(D)$.  Also, we denote the set of all natural prime numbers by $\mathbb{P}$. Suppose that $D\subseteq R$ is an extension of domains. By Zorn's Lemma, there exists a maximal (with respect to inclusion) subset $X$ of $R$ which is algebraically independent over $D$. By maximality, $R$ is algebraic over $D[X]$ (thus every integral domain is algebraic over a UFD; this can be seen by taking $D$ to be the prime subring of $R$). If $E$ and $F$ are the quotient fields of $D$ and $R$, respectively, then $X$ can be shown to be a transcendence basis for $F/E$ (that is, $X$ is maximal with respect to the property of being algebraically independent over $E$). The transcendence degree of $F$ over $E$ is the cardinality of a transcendence basis for $F/E$ (it can be shown that any two transcendence bases have the same cardinality). We denote the transcendence degree of $F$ over $E$ by $tr.deg(F/E)$. If $K$ is a field, the set of all field automorphisms of $K$ is denoted by $Aut(K)$.

\section{Preliminaries}
In this section we present some observation about the structure of subfields of the algebraic closure of finite fields, see \cite{bra}, \cite{azkrm} and \cite{azarang6}. First we have the following definition:

\begin{defn}
A Steinitz number is a symbole of the form $N=\prod_{i=1}^\infty p_i^{x_i}$, where $p_i$ is the $i$-th prime number and $x_i\in\{0,1,2,\ldots,\infty\}$. The set of all such symbols is denoted by $\mathbb{S}$.
\end{defn}

Similar to natural numbers, if $N$ and $M$ are two Steinitz numbers then $N=M$, $NM$, $N|M$ (and therefore $M/N$), $N\bigwedge M$ and $N\bigvee M$ are defined, see \cite[Section2.2]{bra}. The most usage of Steinitz numbers is to describe the structure of subfields of algebraic closure of finite fields. In fact if $q$ is a prime number and $\bar{\mathbb{F}}_q$ is the algebraic closure of $\mathbb{F}_q$, then for each Steinitz number $N$, the set
$$GF(q^N):=\bigcup_{n\in\mathbb{N},\ n|N}\mathbb{F}_{q^n}$$
is a subfield of $\bar{\mathbb{F}}_q$ and conversely, each subfield of $\bar{\mathbb{F}}_p$ is of the form $GF(q^N)$ for some Steinintz number $N$. In fact the $N\leftrightarrow GF(q^N)$ is a one-one correspondence between $\mathbb{S}$ and the set of all subfields of $\bar{\mathbb{F}}_q$. In particular, if $M$ and $N$ are two Steinintz numbers then $N|M$ if and only if $GF(q^N)$ is a subfield of $GF(q^M)$,  see \cite[Theorem 2.4]{bra}. Moreover, $GF(q^M)$ is a finite extension of $GF(q^N)$ if and only if $M/N\in \mathbb{N}$ and in this case the degree of $GF(q^M)$ over $GF(q^N)$ is $M/N$, see \cite[Theorem 2.10]{bra}. {\bf The Steinitz number of an absolutely algebraic field $K$ is denoted by $st(K)$.}\\

Now let $m\in\mathbb{N}$ and $N=\prod_{i=1}^\infty p_i^{x_i}$ be a Steinitz number. Let the Steinitz number $S$ be defined as $S=\prod_{i=1}^\infty p_i^{y_i}$, where $y_i=0$ if $x_i<\infty$ and otherwise $y_i=x_i=\infty$. Then there exists an irreducible polynomial of degree $m$ over $GF(q^N)$ if and only if $m\bigwedge S=1$; equivalently, there exists an irreducible polynomial of degree $m$ over $GF(q^N)$ if and only if for all primes $p$ whenever $p|m$, then $p^\infty\nmid N$, see \cite[Theorem 4.4]{bra}.\\

In \cite{azkrm}, for each Steinitz number $N$, the set $T:=\{n\in\mathbb{N}\ |\ n|N\}$ is called a $FG$-set. By the above notation let us remind the reader some needed results from \cite{azkrm} and \cite{azarang6}. First note that if $E$ is a field with zero characteristic or if $E$ is not algebraic over its prime subfield, then $E$ has a maximal subring. Now if $E$ is a field with nonzero characteristic, say $q$, which is algebraic over $\mathbb{F}_q$, then the existence of maximal subring of $E$ is determined exactely by the Steinitz number of $E$. In fact, if $N=\prod_{i=1}^\infty p_i^{x_i}$ is the Steinitz number of $E$, i.e., $E=GF(q^N)$, and $S=\prod_{i=1}^\infty p_i^{y_i}$, where $y_i=0$ if $x_i<\infty$ and otherwise $y_i=x_i=\infty$. Then $E$ has a maximal subring if and only if $S\neq N$; more precisely, if $S\neq N$, then for each prime number $p|N/S$, $E$ has a maximal subring $E_p$ with Steinitz number $N/p$, and conversely, each maximal subring of $E$ is of the form $E_p$ for some prime number $p|N/S$. Moreover, there exists a natural one-one correspondence between the prime divisors of $N/S$ and maximal subrings of $E$. In particular, $E$ has only finitely many maximal subrings if and only if $N/S\in \mathbb{N}$, \cite[Corollary 2.5]{azarang6}. By \cite[Theorems 3.2 and 3.6]{azarang6}, the following conditions are equivalent for a field $E$:
\begin{enumerate}
\item $E$ has only finitely many maximal subrings.
\item $E$ has a subfield $F$ which has no maximal subrings and $[E:F]$ is finite.
\item Every descending chain $\cdots\subset R_2\subset R_1\subset R_0=E$ where each $R_i$ is a maximal subring of $R_{i-1}$, $i\geq 1$, is finite.
\end{enumerate}
Moreover, if one of the above equivalent conditions holds, then $F$ is unique and contains all subfields of $E$ which have no maximal subrings. Furthermore, all chains in $(3)$ have the same length, $m$ say, and $R_m=F$, where $m$ is the sum of all powers of primes in the factorization of $[E:F]$ into prime numbers.\\

Finally, we remind the reader that if $R$ is a ring and $D$ be a subring of $R$ which is a UFD, then by the proof of \cite[Theorem 1.3]{azomn}, for each irreducible element $p$ of $D$, $R$ has a maximal subring $V_p$ which is integrally closed in $R$ and $(U(R)\cap Irr(D))\setminus\{p\}\subseteq U(V_p)$ (i.e., each prime of $D$ which is non-associate to $p$ and is invertible in $R$ is invertible in $V_p$), also see \cite{azarang7} for more general results.

\section{maximal subrings up to isomorphism of fields}
In this section we study maximal subrings up to isomorphism of certain fields and finally for certain commutative rings too. We remind the reader that if $E$ is a field and $R$ is a subring of $E$, then $R$ is a maximal subring of $E$ if and only either $R$ is a field and $E/R$ is a minimal finite field extension or $R$ is a one dimensional valuation domain (with exactly one nonzero prime ideal), see \cite[Theorem 3.3]{azarang}. Now the following is in order.

\begin{thm}\label{t1}
Let $E$ be a field with zero characteristic. Then $E$ has infinitely many maximal subrings up to isomorphism.
\end{thm}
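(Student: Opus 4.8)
The plan is to exhibit, for a field $E$ of characteristic zero, infinitely many pairwise non-isomorphic maximal subrings. The natural source of maximal subrings of a field is valuation-theoretic: every discrete rank-one valuation ring $V$ with quotient field $E$ is a maximal subring of $E$ (it is a one-dimensional valuation domain, hence maximal by \cite[Theorem 3.3]{azarang}), and two such valuation rings are isomorphic as rings only if there is a compatible comparison of their residue fields and value groups. So the strategy is to produce infinitely many DVRs inside $E$ whose residue fields are pairwise non-isomorphic, which immediately forces the rings themselves to be pairwise non-isomorphic.

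First I would reduce to the prime field: since $\mathrm{Char}(E)=0$, we have $\mathbb{Q}\subseteq E$. For each prime $p\in\mathbb{P}$, the localization $\mathbb{Z}_{(p)}$ is a DVR with residue field $\mathbb{F}_p$. Now extend this valuation to $E$: by the standard extension theorem for valuations (Chevalley's extension theorem), there is a valuation ring $W_p$ of $E$ lying over $\mathbb{Z}_{(p)}$, i.e. $W_p\cap\mathbb{Q}=\mathbb{Z}_{(p)}$, and its residue field $k(W_p)$ contains $\mathbb{F}_p=\mathbb{Z}_{(p)}/p\mathbb{Z}_{(p)}$, hence has characteristic $p$. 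The only remaining issue is that $W_p$ need not be one-dimensional (its value group may have higher rank), so it need not be a \emph{maximal} subring of $E$. To fix this, pass to a height-one prime: replace $W_p$ by the localization at a height-one prime of $W_p$; this yields a rank-one valuation ring $V_p$ of $E$ whose residue field still has characteristic $p$ — indeed its residue field is a residue field of a localization/quotient built from $k(W_p)$ or an intermediate residue field, and in any case receives a ring map from a field containing $\mathbb{F}_p$, so still has characteristic $p$. (Alternatively, and perhaps more cleanly, one invokes \cite[Theorem 3.3]{azarang} together with the existence, inside any field containing $\mathbb{Z}_{(p)}$, of a maximal subring lying over $\mathbb{Z}_{(p)}$; the result quoted in Section~2 about maximal subrings $V_p$ associated to irreducible elements of a UFD subring $D$, applied with $D=\mathbb{Z}$ and the irreducible element $p$, gives a maximal subring $V_p$ of $E$ integrally closed in $E$ with $(U(E)\cap Irr(\mathbb{Z}))\setminus\{p\}\subseteq U(V_p)$, so every prime $\ell\neq p$ is a unit in $V_p$ while $p$ is not — this pins down the characteristic of the residue field as $p$.)

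Having produced, for each prime $p$, a maximal subring $V_p$ of $E$ whose residue field $V_p/\mathfrak{m}_p$ has characteristic $p$, I observe that a ring isomorphism $V_p\cong V_q$ carries the Jacobson radical to the Jacobson radical (both $V_p,V_q$ are local), hence induces an isomorphism of the residue fields $V_p/\mathfrak{m}_p\cong V_q/\mathfrak{m}_q$; since these have different characteristics when $p\neq q$, no such isomorphism exists. Therefore $\{V_p\mid p\in\mathbb{P}\}$ is an infinite family of pairwise non-isomorphic maximal subrings of $E$, proving the theorem.

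The main obstacle, and the only point requiring care, is the dimension issue: guaranteeing that the subring we extract is genuinely \emph{maximal} in $E$ (equivalently, a rank-one valuation domain) and not merely a valuation ring of higher rank. The cleanest route is to bypass Chevalley's theorem entirely and quote the Section~2 fact about maximal subrings $V_p$ attached to the irreducible element $p$ of the UFD $\mathbb{Z}\subseteq E$: this hands us a maximal subring with the unit-structure on rational primes that we need, from which the residue characteristic is read off directly. With that in hand the rest is the elementary residue-field argument above.
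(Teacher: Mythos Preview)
Your ``cleaner route'' is exactly the paper's proof: invoke \cite[Theorem 1.3]{azomn} (the Section~2 fact you quote) to get, for each prime $p$, a maximal subring $V_p$ of $E$ with $1/p\notin V_p$; since $E$ is a field and $V_p$ is not, $V_p$ is a one-dimensional valuation domain, $p$ lies in its maximal ideal, the residue field has characteristic $p$, and the $V_p$ are pairwise non-isomorphic.

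One caution about your preliminary Chevalley route: the step ``localize $W_p$ at a height-one prime $\mathfrak{p}$ and the residue field still has characteristic $p$'' is not justified as stated. The contraction $\mathfrak{p}\cap\mathbb{Z}_{(p)}$ could be $(0)$ rather than $p\mathbb{Z}_{(p)}$ (nothing forces $p\in\mathfrak{p}$ when $\mathfrak{p}\subsetneq\mathfrak{m}_{W_p}$), in which case $(W_p)_{\mathfrak{p}}/\mathfrak{p}(W_p)_{\mathfrak{p}}=\operatorname{Frac}(W_p/\mathfrak{p})$ has characteristic $0$. You were right to flag this as the delicate point and to bypass it via the Section~2 result; the paper does the same.
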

\begin{proof}
Note that by \cite[Theorem 1.3]{azomn}, for each prime number $p$, $E$ has a maximal subring, say $R_p$, such that $1/p\notin R_p$. Since $E$ is a field, we infer that $R_p$ is a one dimensional valuation domain with unique maximal ideal $M_p$. Thus $p\in M_p$ and therefore $Char(R_p/M_p)=p$. This immediately shows that whenever $p$ and
$q$ are distinct prime numbers, then $R_p$ and $R_q$ are not isomorphic. Hence we infer that $E$ has infinitely many maximal subrings up to isomorphism.
\end{proof}

Hence by the previous theorem for studying maximal subrings up to isomorphism of fields, we may assume that the characteristic of a field is nonzero. In the following proposition we first characterize maximal subrings, up to isomorphisms, of absolutely algebraic fields.

\begin{prop}\label{t2}
Let $E$ be a field with nonzero characteristic which is algebraic over its prime subfield. Then the number of maximal subrings of $E$ coincide to the number of maximal subrings up to isomorphism of $E$.
\end{prop}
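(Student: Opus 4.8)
The statement asserts that for an absolutely algebraic field $E = GF(q^N)$ of characteristic $q$, counting maximal subrings and counting maximal subrings up to isomorphism give the same answer. By the structure theory recalled in Section 2 (from \cite{azkrm} and \cite[Corollary 2.5]{azarang6}), the maximal subrings of $E$ are precisely the subrings $E_p$ with Steinitz number $N/p$, one for each prime $p$ dividing $N/S$ (where $S$ is the ``infinite part'' of $N$), and there is a natural bijection between those prime divisors and the maximal subrings. So the plan is: I would show that the map $E_p \mapsto E_p$ from maximal subrings to isomorphism classes of maximal subrings is injective — equivalently, that distinct prime divisors $p \neq p'$ of $N/S$ yield non-isomorphic maximal subrings $E_p$ and $E_{p'}$. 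Surjectivity is trivial since every isomorphism class is represented by some maximal subring, so the content is entirely in injectivity.

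**The key step.** First I would note that each maximal subring $E_p$ of a field $E$ of nonzero characteristic that is algebraic over $\mathbb{F}_q$ is itself a field: indeed $E_p = GF(q^{N/p})$ is a subfield of $E$ by construction. (Alternatively, a one-dimensional valuation domain with residue field of characteristic $q$ contained in an absolutely algebraic field cannot occur here, since every subring of $\bar{\mathbb{F}}_q$ is a field, being integral over $\mathbb{F}_q$ — every element satisfies $x^{q^n} = x$ for some $n$, hence is invertible whenever nonzero.) So all maximal subrings in sight are subfields of $\bar{\mathbb{F}}_q$, and two of them are isomorphic as rings if and only if they are isomorphic as fields. Now the crucial observation is that a subfield of $\bar{\mathbb{F}}_q$ is determined up to isomorphism by its Steinitz number: if $GF(q^M) \cong GF(q^{M'})$ then $M = M'$. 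This is because two finite fields of the same characteristic are isomorphic iff they have the same cardinality, and passing to the limit, $M$ is recovered from $GF(q^M)$ as the Steinitz number built from the degrees $n$ for which $\mathbb{F}_{q^n} \subseteq GF(q^M)$ — an invariant of the abstract field, since $\mathbb{F}_{q^n}$ is intrinsically the set of roots of $x^{q^n} - x$. Hence $E_p \cong E_{p'}$ forces $N/p = N/p'$ (as Steinitz numbers), which forces $p = p'$.

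**Wrapping up.** Combining these: the assignment sending a maximal subring of $E$ to its isomorphism class is injective, and it is plainly surjective, so the number of maximal subrings of $E$ equals the number of isomorphism classes of maximal subrings of $E$. I would present this cleanly by first invoking \cite[Theorem 3.3]{azarang} to say maximal subrings of a field are either minimal subfields or one-dimensional valuation domains, then ruling out the valuation-domain case in the absolutely algebraic setting, then reducing to the Steinitz-number bookkeeping already set up in Section 2.

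**Main obstacle.** The only subtle point — and the step I would be most careful about — is the claim that the Steinitz number is an isomorphism invariant of a subfield of $\bar{\mathbb{F}}_q$, i.e.\ that $GF(q^M) \cong GF(q^{M'})$ implies $M = M'$. This needs the observation that for each $n$, the subfield $\mathbb{F}_{q^n}$ sits inside $GF(q^M)$ in a way detectable intrinsically (as $\{x : x^{q^n}=x\}$, a set defined purely by the ring structure and the characteristic), so an abstract isomorphism must carry $\mathbb{F}_{q^n} \cap GF(q^M)$ onto $\mathbb{F}_{q^n} \cap GF(q^{M'})$; since $\mathbb{F}_{q^n} \subseteq GF(q^M)$ iff $n \mid M$, the divisor sets coincide, hence $M = M'$. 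Everything else is routine given the machinery already assembled in Section 2.
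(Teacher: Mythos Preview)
Your proposal is correct and follows essentially the same line as the paper: both arguments reduce to showing that isomorphic subfields of $\bar{\mathbb{F}}_q$ have the same Steinitz number, by observing that an abstract isomorphism must carry each finite subfield $\mathbb{F}_{q^n}$ to itself (the paper phrases this as ``isomorphic subfields of $\bar{\mathbb{F}}_p$ are equal,'' which is the same thing via the bijection $N \leftrightarrow GF(q^N)$). The paper's write-up is slightly more streamlined in that it proves the general fact about arbitrary subfields rather than restricting to the specific maximal subrings $E_p$, but the mathematical content is identical.
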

\begin{proof}
It suffices to show that if $K$ and $L$ are isomorphic subfields of $\bar{\mathbb{F}}_p$, then $K=L$. It is clear whenever $K$ or $L$ is finite. Now assume that $f$ be an isomorphism from $K$ onto $L$. Let $p^n|st(K)$, i.e., $\mathbb{F}_{p^n}\subseteq K$. Since  $\mathbb{F}_{p^n}\cong f(\mathbb{F}_{p^n})$ is a subfield of $L$, we immediately conclude that
$\mathbb{F}_{p^n}\subseteq L$ and therefore $p^n|st(L)$. This immediately implies that $st(K)|st(L)$ and therefore $K\subseteq L$, similarly $L\subseteq K$ and hence $L=K$.
\end{proof}

Therefore by the pervious proposition and \cite[Theorems 3.2 and 3.6]{azarang6}, we have the following immediate corollary.

\begin{cor}\label{t3}
Let $E$ be a field with nonzero characteristic which is algebraic over its prime subfield, then the following are equivalent
\begin{enumerate}
\item $E$ has only finitely many maximal subrings up to isomorphism.
\item $E$ has only finitely many maximal subrings.
\item $E$ has a subfield $F$ which has no maximal subrings and $[E:F]$ is finite.
\item Every descending chain $\cdots\subset R_2\subset R_1\subset R_0=E$ where
each $R_i$ is a maximal subring of $R_{i-1}$, $i\geq 1$, is
finite.
\end{enumerate}
\end{cor}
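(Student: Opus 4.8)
The plan is to derive this as an immediate consequence of Proposition~\ref{t2} together with the already-quoted equivalence from \cite[Theorems 3.2 and 3.6]{azarang6}. The point of Proposition~\ref{t2} is that for a field $E$ which is algebraic over its prime subfield $\mathbb{F}_p$, two maximal subrings that are abstractly isomorphic (as rings) must in fact be \emph{equal} as subrings of $E$; this is because any isomorphism between subfields of $\bar{\mathbb{F}}_p$ is forced to be the identity (a subfield of $\bar{\mathbb{F}}_p$ is determined by which finite fields $\mathbb{F}_{p^n}$ it contains, and an isomorphism preserves this data). Consequently the counting function ``number of maximal subrings'' and ``number of maximal subrings up to isomorphism'' agree, so finiteness of one is equivalent to finiteness of the other.

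First I would observe that (1)$\Leftrightarrow$(2) is exactly the content of Proposition~\ref{t2}: the two cardinalities in question are literally the same number, so in particular one is finite iff the other is. Then (2)$\Leftrightarrow$(3)$\Leftrightarrow$(4) is precisely the chain of equivalences recorded in the Preliminaries section, quoted verbatim from \cite[Theorems 3.2 and 3.6]{azarang6}, and requires nothing new. So the whole corollary is obtained by chaining these two inputs. One should take a little care that the hypothesis ``nonzero characteristic, algebraic over its prime subfield'' — i.e., $E$ is an absolutely algebraic field — is exactly the standing hypothesis under which both Proposition~\ref{t2} and the cited theorems apply, so the hypotheses line up with no gap.

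There is essentially no obstacle here: the corollary is a formal repackaging. The only thing worth double-checking is the edge case where $E$ has \emph{no} maximal subrings at all (for instance $E = \bar{\mathbb{F}}_p$, or more generally when $S = N$ in the Steinitz-number description): then ``$0$ maximal subrings'' and ``$0$ maximal subrings up to isomorphism'' are both finite, and one can take $F = E$ in (3) and regard the empty chain in (4) as finite, so all four conditions hold trivially and consistently. Thus the proof is just the two-line citation argument, and I would write it as such.

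\begin{proof}
By Proposition~\ref{t2}, the number of maximal subrings of $E$ equals the number of maximal subrings of $E$ up to isomorphism; in particular, one of these is finite if and only if the other is, which proves $(1)\Leftrightarrow(2)$. The equivalences $(2)\Leftrightarrow(3)\Leftrightarrow(4)$ are exactly \cite[Theorems 3.2 and 3.6]{azarang6}, as recalled in Section~2.
\end{proof}
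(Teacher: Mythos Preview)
Your proof is correct and matches the paper's approach exactly: the paper presents this corollary without proof, simply citing Proposition~\ref{t2} and \cite[Theorems 3.2 and 3.6]{azarang6} as making it immediate. Your write-up is, if anything, more explicit than the paper's own treatment.
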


Hence it remains to study field with nonzero characteristic which is not algebraic over its prime subfield. First we have the following main and interesting result which also gives a new characterization of certain algebraically closed fields.

\begin{thm}\label{t4}
Let $K$ be a field with nonzero characteristic which is algebraic over its prime subfield. Then $K$ is algebraic closed if and only if $K(x)$ has only finitely many integrally closed maximal subrings up to isomorphism.
\end{thm}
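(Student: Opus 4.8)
The plan is to prove both implications by exploiting the explicit description of maximal subrings of $K(x)$ that contain $K$, together with the classification of absolutely algebraic fields via Steinitz numbers. For a field $L$ with $K \subseteq L$, the integrally closed maximal subrings of $K(x)$ over $K$ are, up to the relevant classification, either valuation domains of $K(x)$ centered on an irreducible polynomial $q(x) \in K[x]$ (the localization-type places, with residue field a finite extension of $K$ of degree $\deg q$), the valuation domain at infinity $K[1/x]_{(1/x)}$ (residue field $K$), or maximal subfields of $K(x)$ over $K$ — but by the result quoted just before Theorem~\ref{t1} (\cite[Theorem 3.3]{azarang}) a maximal subring of the field $K(x)$ that is not a field is a one-dimensional valuation domain, and the field case gives subrings isomorphic to $K(x)$ itself. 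So the isomorphism types of integrally closed maximal subrings of $K(x)$ over $K$ are detected, in the valuation case, by the residue fields, which are the finite extensions of $K$ of degree $d$ for the various $d$ that occur as degrees of irreducible polynomials over $K$.

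First I would prove the forward direction ($K$ algebraically closed $\Rightarrow$ finitely many types). If $K = \bar{\mathbb F}_p$, then every irreducible polynomial over $K$ is linear, so every valuation domain of $K(x)$ centered on a polynomial has residue field $K$, and the place at infinity also has residue field $K$. Hence all these valuation rings have mutually isomorphic residue fields, and I would argue (using that they are one-dimensional valuation domains with value group $\mathbb Z$ and residue field $K$, and that an abstract isomorphism of such valuation rings is determined by its effect on the residue field — this is where Proposition~\ref{t2}-type rigidity of $\bar{\mathbb F}_p$ enters, since $\mathrm{Aut}(\bar{\mathbb F}_p)$ acts, but the point is merely that there are at most countably, in fact essentially one, type) that they all fall into a single isomorphism class, together with the class of $K(x)$ itself. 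That gives finitely many (indeed two) isomorphism types.

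For the converse I would argue by contraposition: suppose $K$ is absolutely algebraic but not algebraically closed. Then by the Steinitz-number description in Section~2, there are infinitely many natural numbers $d$ such that $K$ admits an irreducible polynomial of degree $d$ (concretely, using \cite[Theorem 4.4]{bra}: if $st(K) = N$ with the "infinite part" $S \neq \prod p_i^\infty$ over all primes, there is a prime $p$ with $p^\infty \nmid N$, and then $p^k$ occurs as such a degree for every $k \geq 1$; if instead $N$ is finite-part-free in the wrong way one still gets infinitely many admissible $d$ unless $K = \bar{\mathbb F}_p$). For each such $d$ pick an irreducible $q_d(x) \in K[x]$; the valuation domain $K[x]_{(q_d)}$ is an integrally closed maximal subring of $K(x)$ containing $K$ with residue field $\mathbb F_{p^{d \cdot m}}$ where $\mathbb F_{p^m}$ is the field generated by the coefficients — more care is needed here, so I would instead take $q_d$ with coefficients in $\mathbb F_p$ where possible, or simply track the residue field as a finite extension of $K$ and observe that its degree over the prime field is unbounded as $d \to \infty$. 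Two valuation domains with non-isomorphic residue fields are non-isomorphic rings, and finite fields of different orders are non-isomorphic, so we obtain infinitely many pairwise non-isomorphic integrally closed maximal subrings, contradicting the hypothesis.

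The main obstacle, and the step I would spend the most care on, is the bookkeeping on residue fields in the converse: I must ensure that the residue fields of the chosen valuation rings $K[x]_{(q_d)}$ really do realize infinitely many distinct finite-field isomorphism types, which requires choosing the irreducible polynomials $q_d$ so that the compositum $K(\text{root of } q_d)$ has unbounded degree over $\mathbb F_p$ — this is exactly governed by which degrees $d$ are "allowed" over $K$ per \cite[Theorem 4.4]{bra}, and I would need the elementary fact that when $K \neq \bar{\mathbb F}_p$ the set of allowed degrees is infinite (equivalently $st(K)$ is not divisible by $p_i^\infty$ for every $i$, which is precisely the failure of algebraic closedness). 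A secondary technical point is verifying in the forward direction that all the residue-field-$K$ valuation rings are genuinely isomorphic as rings and not merely as $K$-algebras up to $\mathrm{Aut}(K)$; since over $\bar{\mathbb F}_p$ every such ring is $K$-isomorphic to $K[x]_{(x)}$ by a linear change of variable, this collapses cleanly, and the place at infinity is also $K$-isomorphic to $K[x]_{(x)}$ via $x \mapsto 1/x$, so only the two classes $K(x)$ and $K[x]_{(x)}$ survive.
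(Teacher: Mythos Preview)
Your overall strategy matches the paper's: in the forward direction, show every integrally closed maximal subring of $K(x)$ is a DVR of the form $K[x]_{(x-a)}$ or $K[1/x]_{(1/x)}$, all $K$-isomorphic to $K[t]_{(t)}$; in the converse, produce irreducible polynomials of infinitely many degrees over $K$ and distinguish the corresponding localizations by their residue fields. Two corrections are needed, one minor and one genuine.

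The minor one: maximal subrings of $K(x)$ that are fields are \emph{not} integrally closed in $K(x)$ (the extension $K(x)/R$ is then finite, hence integral), so the class ``$K(x)$ itself'' should not appear. Also, you should say explicitly why every integrally closed maximal subring contains $K$: any subring contains $\mathbb{F}_p$, and since $K$ is algebraic over $\mathbb{F}_p$ and the subring is integrally closed in $K(x)$, it contains $K$. The paper does exactly this and then quotes \cite[Theorem~66]{kap} to get the DVR list.

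The genuine gap is in the converse, at the point you yourself flag. Your proposed invariant --- ``finite fields of different orders'' or ``degree over the prime field is unbounded'' --- breaks down when $K$ is \emph{infinite} absolutely algebraic: then the residue fields $K[x]/(q_d)$ are infinite, and their degree over $\mathbb{F}_p$ is already infinite for every $d$. The fix is to use Steinitz numbers, as the paper does. Choose a prime $p_j$ with $p_j^{\infty}\nmid st(K)$ (this exists precisely because $K\neq\bar{\mathbb{F}}_p$), take $f_n\in K[x]$ irreducible of degree $p_j^{\,n}$ via \cite[Theorem~4.4]{bra}, and set $R_n=K[x]_{(f_n)}$ with residue field $K_n=K[x]/(f_n)$. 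Then $st(K_n)=st(K)\cdot p_j^{\,n}$, and since the $p_j$-exponent in $st(K)$ is finite these Steinitz numbers are pairwise distinct; by Proposition~\ref{t2} (isomorphic absolutely algebraic fields have equal Steinitz numbers) the $K_n$ are pairwise non-isomorphic as abstract fields, hence so are the $R_n$. This is exactly the bookkeeping the paper carries out, and it replaces your ``degree over $\mathbb{F}_p$'' argument, which does not work as written.
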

\begin{proof}
First suppose that $K$ is an algebraically closed field, then we prove that $K(x)$ has a unique integrally closed maximal subring up to isomorphism. Assume that $R$ is an integrally closed maximal subring of $K(x)$, since $\mathbb{F}_p\subseteq R$, we conclude that $K\subseteq R$. Now note that $R$ is a one dimensional valuation domain and therefore by \cite[Theorem 66]{kap}, we deduce that either $R=K[x]_{(x-a)}$ for some $a\in K$ (note, $K$ is algebraically closed) or $R=K[x^{-1}]_{(x^{-1})}$. This immediately shows that $R$ is unique up to isomorphism (note, in any case $R\cong K[t]_{(t)}$). Conversely, assume that $K(x)$ has only finitely many integrally closed maximal subrings up to isomorphism. We show that $K$ is algebraically closed. Suppose $K$ is not algebraically closed, hence we conclude that $st(K)=\prod_{i=1}^\infty p_i^{x_i}$ and there exists $j$ such that $x_j\neq \infty$. Therefore by \cite[Theorem 4.4]{bra}, for each natural number $n$, there exists an irreducible polynomial $f_n(x)$ of degree $p_j^n$ over $K$. Hence for each natural number $n$, the ring $R_n=K[x]_{(f_n(x))}$ is an integrally closed maximal subring of $K(x)$. Now if $n\neq m$, then we claim that $R_n$ and $R_m$ are not isomorphic. For otherwise, if $R_n\cong R_m$, then we conclude that $K_n:=K[x]/(f_n(x))\cong K_m:=K[x]/(f_m(x))$ and therefore $st(K_n)=st(K_m)$. Hence $st(K)p_j^n=st(K)p_j^m$ and thus $m=n$ which is absurd. Thus $K$ is an algebraically closed field.
\end{proof}

As the following result shows, the half of the previous theorem holds for more general fields.

\begin{prop}\label{tt}
Let $K$ be a field with nonzero characteristic which is algebraic over its prime subfield and $X$ be a set of indeterminate variables over $K$ where $|X|\geq 2$. If $K(X)$ has only finitely many integrally closed maximal subrings up to isomorphism, then $K$ is algebraically closed.
\end{prop}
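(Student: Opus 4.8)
The plan is to imitate the proof of the converse direction of Theorem \ref{t4}: assuming $K$ is not algebraically closed, I would exhibit infinitely many pairwise non-isomorphic integrally closed maximal subrings of $K(X)$. The new difficulty compared with the one-variable case is that the residue fields of the valuation domains we build are no longer absolutely algebraic, so to distinguish them up to isomorphism I would pass to an isomorphism-invariant subfield, namely the relative algebraic closure of the prime field $\mathbb{F}_p$.

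So suppose $K$ is not algebraically closed. Then $st(K)=\prod_{i=1}^{\infty}p_i^{x_i}$ has some exponent $x_j<\infty$, and hence by \cite[Theorem 4.4]{bra}, for every $n\in\mathbb{N}$ there is an irreducible polynomial $f_n\in K[y]$ of degree $p_j^{\,n}$, where $y\in X$ is a fixed variable. Set $F:=K(X\setminus\{y\})$, a purely transcendental extension of $K$ (this is where $|X|\ge 2$ is used, so that in general $F\neq K$). A standard Gauss's-lemma argument shows that a polynomial which is irreducible in $K[y]$ remains irreducible over the purely transcendental extension $F$; thus $f_n$ is irreducible in $F[y]$, and $R_n:=F[y]_{(f_n(y))}$ is a DVR with quotient field $F(y)=K(X)$, hence (by the description of maximal subrings of a field recalled before Theorem \ref{t1}) an integrally closed maximal subring of $K(X)$.

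I would then compute the residue field of the local ring $R_n$. Since $f_n$ is irreducible of the same degree $p_j^{\,n}$ over both $K$ and $F$, letting $\alpha_n$ be a root of $f_n$ we get $R_n/\mathfrak{m}_{R_n}\cong F[y]/(f_n(y))\cong K(\alpha_n)(X\setminus\{y\})$, a purely transcendental extension of the absolutely algebraic field $K(\alpha_n)=K[y]/(f_n(y))$, for which $st(K(\alpha_n))=p_j^{\,n}\,st(K)$ by \cite[Theorem 2.10]{bra}. The elementary lemma I would isolate is: if $L(T)$ is a purely transcendental extension of a field $L$, then the relative algebraic closure of the prime field in $L(T)$ equals the relative algebraic closure of the prime field in $L$ (reduce to a single indeterminate, and note that any element of $L(T)\setminus L$ is transcendental over $L$). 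Applied with $L=K(\alpha_n)$, which is its own relative algebraic closure of $\mathbb{F}_p$, this shows that the relative algebraic closure of $\mathbb{F}_p$ inside the residue field of $R_n$ is exactly $K(\alpha_n)$.

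Finally, if $R_n\cong R_m$ then these isomorphic local rings have isomorphic residue fields; since a field isomorphism carries the relative algebraic closure of the prime field onto the relative algebraic closure of the prime field, we obtain $K(\alpha_n)\cong K(\alpha_m)$. Both are subfields of $\bar{\mathbb{F}}_p$, so by the argument in the proof of Proposition \ref{t2} they have the same Steinitz number, i.e. $p_j^{\,n}\,st(K)=p_j^{\,m}\,st(K)$, forcing $n=m$ because $x_j<\infty$. Hence $\{R_n\}_{n\in\mathbb{N}}$ are pairwise non-isomorphic integrally closed maximal subrings of $K(X)$, contradicting the hypothesis; therefore $K$ is algebraically closed. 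I expect the only genuinely new point (beyond transcribing the proof of Theorem \ref{t4}) to be the lemma on relative algebraic closures and the observation that it furnishes a bona fide isomorphism invariant of the residue field; the irreducibility persistence and the maximal-subring bookkeeping are routine.
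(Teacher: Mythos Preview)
Your proposal is correct and follows essentially the same route as the paper: fix one variable $y\in X$, set $F=K(X\setminus\{y\})$, and for each $n$ localize $F[y]$ at an irreducible $f_n\in K[y]$ of degree $p_j^{\,n}$ to obtain pairwise non-isomorphic DVR's in $K(X)$. The only difference is that the paper passes directly from $K_n(Y)\cong K_m(Y)$ to $K_n\cong K_m$ without comment, whereas you make this step rigorous via the relative algebraic closure of $\mathbb{F}_p$; your lemma is exactly what underlies that implication, so your argument is a more fully justified version of the paper's own proof rather than a different approach.
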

\begin{proof}
Assume that $K(X)$ has only finitely many integrally closed maximal subrings up to isomorphism. We show that $K$ is algebraically closed. Let $x\in X$ and put $Y=X\setminus\{x\}$. Now suppose $K$ is not algebraically closed, hence we conclude that $st(K)=\prod_{i=1}^\infty p_i^{x_i}$ and there exists $j$ such that $x_j\neq \infty$. Therefore by \cite[Theorem 4.4]{bra}, for each natural number $n$, there exists an irreducible polynomial $f_n(x)$ of degree $p_j^n$ over $K$. Clearly, $f_n(x)$ remains irreducible in $K[X]$ and therefore in $K(Y)[x]$ (note, $K[X]=K[Y][x]$ and $K[Y]$ is a UFD). Hence for each natural number $n$, the ring $R_n=K(Y)[x]_{(f_n(x))}$ is an integrally closed maximal subring of $K(X)$ with maximal ideal $M_n=(f_n(x))_{(f_n(x))}$. Now if $n\neq m$ are natural numbers, then we claim that $R_n$ and $R_m$ are not isomorphic. For otherwise, if $R_n\cong R_m$, then we conclude that $R_n/M_n\cong R_m/M_m$ and therefore $K(Y)[x]/(f_n(x))\cong K(Y)[x]/(f_m(x))$. This implies that $K_n(Y)\cong K_m(Y)$, where $K_r:=K[x]/(f_r(x))$. Therefore $K_n\cong K_m$ and thus $st(K_n)=st(K_m)$. Hence $st(K)p_j^n=st(K)p_j^m$ and thus $m=n$ which is absurd. Thus $K$ is an algebraically closed field.
\end{proof}

\begin{prop}\label{tt1}
Let $K$ be an absolutely algebraic field with infinitely many maximal subrings (up to isomorphism). If $x$ is a variable over $K$, then $K(x)$ has infinitely many maximal subrings (which are subfields of $K(x)$) up to isomorphism.
\end{prop}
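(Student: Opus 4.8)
The plan is to exhibit infinitely many pairwise non-isomorphic field maximal subrings of $K(x)$, obtained by ``extending scalars'' from the subfields of $K$ of prime index. Write $K$ in characteristic $p$ as $K=GF(p^N)$ for a Steinitz number $N$. First I would use Proposition~\ref{t2} together with the structure theory recalled in Section~2: the hypothesis that $K$ has infinitely many maximal subrings up to isomorphism gives, via Proposition~\ref{t2}, that $K$ has infinitely many maximal subrings, and each of these is a subfield of $K$ of the form $GF(p^{N/q})$ for a prime $q\mid N/S$, with $[K:GF(p^{N/q})]=q$. Hence there is an infinite set $\mathcal{Q}$ of primes such that for each $q\in\mathcal{Q}$ there is a subfield $F_q\subseteq K$ with $[K:F_q]=q$, and distinct members of $\mathcal{Q}$ yield subfields of distinct index.

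Next, for each $q\in\mathcal{Q}$ I would verify that $F_q(x)$ is a maximal subring of $K(x)$ which is a subfield. The crucial point is the degree identity $[K(x):F_q(x)]=[K:F_q]=q$: since $K$ is absolutely algebraic it is perfect, so $K/F_q$ is finite separable and hence simple, say $K=F_q(\alpha)$; and the minimal polynomial of $\alpha$ over $F_q$ remains irreducible over the purely transcendental extension $F_q(x)$, because $F_q$ is relatively algebraically closed in $F_q(x)$ while the conjugates of $\alpha$ are algebraic over $F_q$. Since $q$ is prime there is no field strictly between $F_q(x)$ and $K(x)$, and any ring between them is an integral domain algebraic over the field $F_q(x)$, hence itself a field; so by \cite[Theorem~3.3]{azarang} the extension $F_q(x)\subseteq K(x)$ is minimal, i.e. $F_q(x)$ is a maximal subring of $K(x)$, and it is plainly a subfield of $K(x)$.

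Finally I would show that the fields $F_q(x)$, $q\in\mathcal{Q}$, are pairwise non-isomorphic, which I expect to be the heart of the argument. The key observation is that $F_q$ is recoverable intrinsically from $F_q(x)$ as the relative algebraic closure of the prime subfield $\mathbb{F}_p$ inside $F_q(x)$ (this uses that $F_q$ is algebraic over $\mathbb{F}_p$ and that $F_q$ is relatively algebraically closed in $F_q(x)$). Therefore any ring isomorphism $F_q(x)\to F_{q'}(x)$, which necessarily fixes $\mathbb{F}_p$, must carry $F_q$ onto $F_{q'}$, so $F_q\cong F_{q'}$; by Proposition~\ref{t2} isomorphic subfields of $\bar{\mathbb{F}}_p$ are equal, whence $F_q=F_{q'}$ and thus $q=[K:F_q]=[K:F_{q'}]=q'$. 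Consequently $q\mapsto F_q(x)$ produces infinitely many pairwise non-isomorphic field maximal subrings of $K(x)$, as required. The two steps needing care are the preservation of irreducibility under the purely transcendental base change (for the degree computation) and the intrinsic characterization of $F_q$ inside $F_q(x)$; both are standard facts about function fields in one variable, so I do not anticipate a genuine obstruction.
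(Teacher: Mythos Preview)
Your proposal is correct and follows essentially the same route as the paper: pass from the infinitely many maximal subfields $F_q\subseteq K$ of prime index $q$ to the subfields $F_q(x)\subseteq K(x)$, check these are maximal subrings, and show they are pairwise non-isomorphic. The paper's own proof is terser---it simply asserts $[K(x):F_q(x)]=q$ and that $F_q\ncong F_{q'}$ forces $F_q(x)\ncong F_{q'}(x)$---so the two points you single out as ``needing care'' are exactly the details the paper leaves implicit.
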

\begin{proof}
Let $st(K)=\prod_{i=1}^\infty p_i^{x_i}$. Since $K$ has infinity many maximal subrings, by observation in Section 1, we conclude that the set $A:=\{i\in \mathbb{N} \ |\ 0<x_i<\infty\}$ is infinite. Now for each $i\in A$, let $E_i$ be a maximal subring of $K$ with $st(E_i)=st(K)/p_i$. Therefore $[K:E_i]=p_i$. Clearly for each $i\neq j$ in $A$, $E_i\ncong E_j$ and for each $i\in S$, $[K(x): E_i(x)]=p_i$. Thus $E_i(x)$ is a maximal subring of $K(x)$. Finally note that for $i\neq j$ in $A$ we have $E_i\ncong E_j$. This immediately shows that $E_i(x)\ncong E_j(x)$. Thus $K(x)$ has infinitely many maximal subrings up to isomorphism.
\end{proof}

Let $R$ be a ring which is a subring of the rings $S,T$ and $\phi:S\rightarrow T$ be a ring homomorphism. If $\phi|_R$ is the identity, then $\phi$ is called $R$-algebra homomorphism. In particular, if $R$ is a field and $\phi$ is an isomorphism, then we say that $S$ and $T$ are $R$-isomorphic. For more observation about maximal subrings up to isomorphism of simple transcendental extension over a field we need the following fact, see \cite[Theorem 11.3.4]{cohn}.\\

{\large\bf L\"{u}roth's Theorem}: Let $k$ be a field and $k(x)$ be a simple transcendental extension of $k$. Then every field $E$ such that $k\subset E\subseteq k(x)$ is of the form $E=k(u)$, where $u$ is transcendental over $k$.

\begin{prop}\label{tt2}
Let $K$ be a field. Then the following hold:
\begin{enumerate}
\item If $R$ is a maximal subrings of $k(x)$ which contains $k$ and is a field, then $R\cong k(x)$.
\item $k$ is algebraically closed if and only if maximal subrings of $k(x)$ which contains $k$ are $k$-isomorphic to $k(x)$ or $k[x]_{(x)}$.
\end{enumerate}
\end{prop}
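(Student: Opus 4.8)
The plan is to exploit the classification of maximal subrings of a field $E$ recalled before Theorem~\ref{t1}: a subring $R$ of $E$ is maximal if and only if either $R$ is a field with $E/R$ a minimal finite field extension, or $R$ is a one-dimensional valuation domain of $E$ with a single nonzero prime. For part (1), suppose $R$ is a maximal subring of $k(x)$ which contains $k$ and is a field. Then $k(x)/R$ is a finite (hence algebraic) extension, so $x$ is algebraic over $R$ and therefore $tr.deg(k(x)/k)=tr.deg(R/k)=1$; in particular $R$ is not algebraic over $k$, so $R\neq k$ and we have $k\subsetneq R\subseteq k(x)$. By L\"uroth's Theorem, $R=k(u)$ for some $u$ transcendental over $k$, hence $R\cong k(x)$ as required. (The containment $k\subseteq R$ is used precisely to put us in the hypothesis of L\"uroth's Theorem; without it the statement would be false, as $k[x]_{(x)}$ shows it is not even a field.)

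For part (2), first assume $k$ is algebraically closed and let $R$ be any maximal subring of $k(x)$ containing $k$. If $R$ is a field, part (1) gives $R\cong k(x)$, but in fact we can say more: by L\"uroth $R=k(u)$ with $u$ transcendental over $k$, and the natural $k$-embedding $k(u)\hookrightarrow k(x)$ is then automatically a $k$-isomorphism onto $R$ viewed inside $k(x)$, because a proper intermediate field strictly between $k(u)$ and $k(x)$ would contradict maximality, and $[k(x):k(u)]$ finite together with $k$ algebraically closed forces $[k(x):k(u)]=1$ — here one uses that $k(x)/k(u)$ cannot be a nontrivial finite extension since $k(x)$ has no nontrivial finite extensions obtained this way when... more carefully: $k(x)$ is a purely transcendental extension of $k$ of degree $1$ and any proper finite extension of $k(u)$ inside $k(x)$ would have transcendence degree $1$ over $k$ and be finite over $k(u)$, which is possible a priori, so instead I argue directly that maximality plus $k(u)\subseteq k(x)$ with $k$ algebraically closed forces $k(u)=k(x)$ — this is the point to double-check. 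If instead $R$ is a one-dimensional valuation domain containing $k$, then by Kaplansky \cite[Theorem~66]{kap} (as used in the proof of Theorem~\ref{t4}) the only such valuation rings of $k(x)$ with $k$ algebraically closed are $k[x]_{(x-a)}$ for $a\in k$ and $k[x^{-1}]_{(x^{-1})}$, all of which are $k$-isomorphic to $k[x]_{(x)}$ via $x\mapsto x-a$ (resp. $x\mapsto x^{-1}$). Conversely, assume every maximal subring of $k(x)$ containing $k$ is $k$-isomorphic to $k(x)$ or $k[x]_{(x)}$. I would show $k$ is algebraically closed by contraposition: if $f\in k[x]$ is irreducible of degree $d\geq 2$, then $R=k[x]_{(f)}$ is an integrally closed (valuation) maximal subring of $k(x)$ containing $k$, with residue field $k[x]/(f)$ a proper finite extension of $k$, so $R$ is not $k$-isomorphic to $k[x]_{(x)}$ (whose residue field is $k$) nor to the field $k(x)$; this contradicts the hypothesis.

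The main obstacle I anticipate is the clean handling of the case in part (2) where the maximal subring $R$ is itself a field: showing that the abstract isomorphism $R\cong k(x)$ from part (1) can be upgraded to a \emph{$k$-isomorphism} of $R$ onto $k(x)$ inside $k(x)$. The natural approach is to write $R=k(u)$ by L\"uroth and then argue that $[k(x):k(u)]$ must equal $1$; one wants to rule out $k(u)\subsetneq k(x)$ being a proper finite extension. When $k$ is algebraically closed this should follow because $k(x)/k(u)$ finite and separable-or-not would still force, via the fact that $k(u)$ and $k(x)$ are rational function fields of the same genus $0$ curve, that they coincide — but rather than invoke function-field genus I would instead argue: maximality of $R=k(u)$ in $k(x)$ means there is no ring strictly between them; if $k(u)\subsetneq k(x)$ properly with $[k(x):k(u)]=n$, this does not immediately contradict maximality, so the honest fix is to note that under the hypothesis of part (2) we are \emph{assuming} $R$ is $k$-isomorphic to $k(x)$ (that is the conclusion we must reach in the forward direction only), so in the forward direction I simply must exhibit \emph{some} $k$-isomorphism, and the inclusion $k(u)\hookrightarrow k(x)$ composed with an automorphism may not be identity on $k(u)$; thus the cleanest route is: by L\"uroth $R=k(u)$, and then I claim $k(u)=k(x)$ as subfields of $k(x)$, because if not, pick $v\in k(x)\setminus k(u)$; then $k(u)\subsetneq k(u,v)\subseteq k(x)$, contradicting that $R=k(u)$ is a \emph{maximal} subring. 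Hence $R=k(x)$ literally, and the identity map works. I expect writing this last maximality argument carefully — distinguishing "maximal as a subring" from "maximal among intermediate fields" — to be the delicate point, but it goes through since any intermediate ring between $k(u)$ and $k(x)$ with $k(u)$ a field is itself between them as sets.
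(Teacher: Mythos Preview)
Your arguments for part~(1), for the valuation case in the forward direction of~(2), and for the converse in~(2) are all correct and match the paper's approach closely. The one genuine gap is in the field case of the forward direction of~(2).

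There you try to upgrade the abstract isomorphism $R\cong k(x)$ to a $k$-isomorphism by proving the stronger claim that $R=k(x)$ \emph{as subsets} of $k(x)$. Your argument is: write $R=k(u)$ by L\"uroth; if $k(u)\subsetneq k(x)$, pick $v\in k(x)\setminus k(u)$ and get a strict chain $k(u)\subsetneq k(u,v)\subseteq k(x)$, contradicting maximality. But this fails because nothing prevents $k(u,v)=k(x)$, and in fact the conclusion you are aiming for is false: take $k$ algebraically closed of characteristic $\neq 2$ and $R=k(x^2)$. Then $[k(x):k(x^2)]=2$ is prime, so $k(x^2)$ is a maximal subring of $k(x)$ which is a field containing $k$, yet $k(x^2)\neq k(x)$.

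The fix is much simpler than the route you attempted, and it is exactly what the paper does (implicitly): the isomorphism produced by L\"uroth is \emph{already} a $k$-isomorphism. Indeed, L\"uroth gives $R=k(u)$ with $u$ transcendental over $k$, and the map $k(u)\to k(x)$ determined by $u\mapsto x$ and the identity on $k$ is a $k$-algebra isomorphism. You do not need $R$ to equal $k(x)$ inside $k(x)$; you only need some $k$-isomorphism between them, and the obvious one works. So the paper simply cites part~(1) at this point and moves on.
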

\begin{proof}
For $(1)$ first note that if $R$ is a maximal subring of a ring $T$, then for each $t\in T$ either $t^2\in R$ or $R[t^2]=T$. This immediately shows that $T$ is algebraic over $R$. Thus if $R$ is a maximal subring of $k(x)$ we conclude that $k(x)$ is algebraic over $R$ and therefore $R\neq k$. Hence we are done by L\"{u}roth's Theorem.\\
For $(2)$, first assume that $k$ is algebraically closed. Let $R$ be a maximal subring of $k(x)$ which contains $k$. If $R$ is a field, then by $(1)$, $R\cong k(x)$. If $R$ is not a field then by a similar proof of Theorem \ref{t4}, we conclude that $R\cong k[x]_{(x)}$. Conversely, if $k$ is not algebraically closed field, let $p(x)$ be an irreducible polynomial over $k$ of degree $n>1$. Then clearly $R=k[x]_{(p(x))}$ is a maximal subring of $k(x)$ which contains $k$ and $R$ is not a field. Thus by our assumption we infer that $R$ is $k$-isomorphic to $k[x]_{(x)}$. Therefore if $M$ is a maximal ideal of $R$, we conclude that $R/M$ is $k$-isomorphic to $k$, i.e., $k[x]/(p(x))$ is $k$-isomorphic to $k$ which is absurd. Thus $k$ is algebraically closed field.
\end{proof}

We need the following remark for the next observations.

\begin{rem}
Let $K$ be a field and $X$ be a set of indeterminate variables over $K$. Assume that $R$ is a maximal subring of $K(X)$ which is integrally closed in $K(X)$ (i.e., $R$ is a one dimensional valuation domain) and $K\subseteq R$. Then $R$ contains a copy of $K[X]$ (which is a UFD) with quotient field $K(X)$. To see this first note that $R$ is a valuation for $K(X)$. Therefore for each $x\in X$, either $x\in R$ or $x^{-1}\in R$. Now define $\epsilon(x)=1$ whenever $x\in R$ and $\epsilon(x)=-1$ whenever $x^{-1}\in R$. If we put $Y=\{x^{\epsilon(x)}\ |\ x\in X\}$, then $K[Y]$ is isomorphic to $K[X]$ and $K[Y]\subseteq R$, and clearly $K(X)$ is the quotient field of $K[Y]$. In particular, without loss of the generality we may assume that $K[Y]\subseteq R\subseteq K(Y)$, i.e., $R$ is an overrring of $K[Y]$ which is a maximal subring of $K(Y)$.
\end{rem}

Now we prove a result for more general case instead of fields.

\begin{thm}\label{t5}
Let $R$ be a ring and $D$ be a subring of $R$ which is a UFD. Let $\alpha=|U(R)\cap Irr(D)|$. Then $R$ has at least $\alpha$ maximal subrings up to $D$-algebra isomorphism.
\end{thm}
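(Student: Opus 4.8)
The plan is to realise each element of $U(R)\cap Irr(D)$ by its own maximal subring of $R$ and then to separate these subrings up to $D$-algebra isomorphism by an invertibility obstruction. Fix, for each irreducible $p$ of $D$, the maximal subring $V_p$ of $R$ furnished by the consequence of \cite[Theorem 1.3]{azomn} recalled at the end of Section~2: $V_p$ is integrally closed in $R$, contains $D$, and satisfies $(U(R)\cap Irr(D))\setminus\{p\}\subseteq U(V_p)$. The first thing I would record is that when $p\in U(R)$ this $V_p$ may be taken with $1/p\notin V_p$, i.e.\ $p\notin U(V_p)$. This is really the content of singling out $p$ in the displayed property, and it can be extracted from the construction in \cite{azomn}: $V_p$ is built over the ring $C:=D[\,q^{-1}\ :\ q\in Irr(D)\cap U(R),\ q\neq p\,]\subseteq R$, which is a localisation of the UFD $D$ at a multiplicative set containing no associate of $p$; hence $p$ remains a prime element of $C$, so $1/p\notin C$, and the maximal subring produced between $C$ and $R$ keeps $p$ inside a maximal ideal.

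Now put $\Lambda:=U(R)\cap Irr(D)$, so $|\Lambda|=\alpha$. The core step is to show that distinct $p,p'\in\Lambda$ give $V_p\not\cong_D V_{p'}$. Suppose $\phi\colon V_p\to V_{p'}$ were a $D$-algebra isomorphism. Since $p$ and $p'$ are non-associate irreducibles of $D$ and $p\in U(R)$, the displayed property applied to $V_{p'}$ gives $1/p\in U(V_{p'})$, so by surjectivity there is $a\in V_p$ with $\phi(a)=1/p$; as $\phi$ fixes $D$ pointwise and $p\in D$, we get $\phi(pa)=p\cdot(1/p)=1=\phi(1)$, whence $pa=1$ in $V_p$ by injectivity. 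But $p$ has a unique inverse in $R$, so $a=1/p\in V_p$, contradicting $1/p\notin V_p$. Hence the maximal subrings $V_p$, $p\in\Lambda$, are pairwise non-isomorphic as $D$-algebras, and $R$ has at least $|\Lambda|=\alpha$ maximal subrings up to $D$-algebra isomorphism.

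I expect the only delicate point to be the extra property $1/p\notin V_p$ (equivalently $p\notin U(V_p)$); once that is granted the isomorphism argument above is a two-line triviality. So the work is in re-examining the proof of \cite[Theorem 1.3]{azomn} to confirm that the Zorn-maximal subring produced there sits above $C$ — in which $p$ is a non-unit — and that adjoining $1/p$ to it already recovers all of $R$, which gives simultaneously maximality of $V_p$ in $R$ and $p\notin U(V_p)$. No separate treatment of extreme cardinalities is needed: if $\alpha=0$ the assertion is vacuous, and the injection $p\mapsto V_p$ into the set of $D$-algebra isomorphism classes of maximal subrings works for an index set $\Lambda$ of any cardinality.
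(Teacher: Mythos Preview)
Your proposal is correct and follows essentially the same approach as the paper: construct, for each $p\in U(R)\cap Irr(D)$, the maximal subring $V_p$ from \cite[Theorem 1.3]{azomn} with $1/p\notin V_p$ and $(U(R)\cap Irr(D))\setminus\{p\}\subseteq U(V_p)$, and then separate the $V_p$ up to $D$-algebra isomorphism via the invertibility obstruction. The paper's separation step is phrased slightly more directly (if $\phi\colon V_p\to V_q$ is a $D$-algebra isomorphism then $q=\phi(q)$ would be a unit in $V_q$, contradiction), but this is the same argument as yours with the roles of $p$ and $p'$ interchanged.
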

\begin{proof}
Similar to the proof of \cite[Theorem 1.3]{azomn}, for each irreducible element $p$ of $D$ which is a unit in $R$, $R$ has a maximal subring $V_p$ which is integrally closed in $R$, $D\subseteq V_p$, $1/p\notin V_p$ and $(U(R)\cap Irr(D))\setminus\{p\}\subseteq U(V_p)$ (i.e., each prime of $D$ which is non-associate to $p$ and is invertible in $R$, is invertible in $V_p$). Therefore for each $p\neq q$ in $U(R)\cap Irr(D)$ we infer that $V_p\neq V_q$. Now we claim that $V_p$ and $V_q$ are not isomorphic as $D$-algebra too. If $\phi$ is a $D$-algebra isomorphism from $V_p$ onto $V_q$, then $q=\phi(q)$ is invertible in $V_q$ which is absurd. Thus we are done.
\end{proof}

Now we have the following result for fields.

\begin{prop}\label{t6}
Let $K$ be a field which is not absolutely algebraic field. Then there exists a UFD subring $D$ of $K$ such that
\begin{enumerate}
\item $K$ is algebraic over $D$.
\item $|Irr(D)|=|K|$.
\item $K$ has at least $|K|$-many maximal subrings up to $D$-algebra isomorphism.
\end{enumerate}
\end{prop}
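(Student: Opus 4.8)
The plan is to build $D$ as a polynomial ring over the prime field in a transcendence basis of $K$, and then apply Theorem \ref{t5}. Since $K$ is not absolutely algebraic, its prime subfield is either $\mathbb{Q}$ or $\mathbb{F}_p$, and in either case $K$ has positive transcendence degree over its prime subfield; let $T$ be a transcendence basis, so $T\neq\emptyset$. Set $F$ to be the prime subfield and let $D_0=F[T]$, the polynomial ring in the variables $T$ over $F$. Then $D_0$ is a UFD (a polynomial ring over a field is a UFD), and $K$ is algebraic over $D_0$ because $T$ is a transcendence basis, so $K$ is algebraic over $F(T)$ and a fortiori over $D_0$. This already gives a UFD subring over which $K$ is algebraic. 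However, we need to arrange that $|Irr(D)|=|K|$ \emph{and} that $|Irr(D)|$ of the irreducibles are units in $K$; the second condition is what forces us to localize.

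The first issue is to get enough irreducibles. If $T$ is infinite, then $|Irr(D_0)|=|D_0|=|T|+\aleph_0$; if moreover $K$ is not too large this may be strictly less than $|K|$, so we should instead invoke that $K=\overline{F(T)}$-stuff — actually the cleaner route: since $K$ is algebraic over $F(T)$ we have $|K|=|F(T)|$ when $T$ is infinite, which equals $|D_0|=|Irr(D_0)|$; and if $T$ is finite and nonempty (or if $F=\mathbb{Q}$ even with $T=\emptyset$), then $D_0$ is a polynomial ring in finitely many variables over $\mathbb{Q}$ or $\mathbb{F}_p$, which is countable, and $|K|$ could be uncountable, so we would be short. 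To handle the general case uniformly, I would enlarge $T$ if necessary: pick a transcendence basis $T$ and observe $|K| = \max(|F|,|T|,\aleph_0)$ (for $F=\mathbb{F}_p$ this is $\max(|T|,\aleph_0)$, for $F=\mathbb{Q}$ it is $\max(|T|,\aleph_0)$ as well since $\mathbb{Q}$ is countable), so in all cases $|K|=|F(T)|=|D_0|$, hence $|Irr(D_0)|=|K|$ because a UFD that is not a field has as many irreducibles as its cardinality (when infinite) — here $D_0$ is infinite and not a field since $T\neq\emptyset$ or $F=\mathbb{Q}$.

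Next, the key step: we need $|U(K)\cap Irr(D)|=|K|$ for the chosen $D$. The units of $K$ are all nonzero elements, so $U(K)\cap Irr(D_0)$ is just the set of irreducible elements of $D_0$ (all nonzero elements of $D_0$ lie in $K^\times$). Wait — more carefully, $U(K)=K\setminus\{0\}$, so $U(K)\cap Irr(D_0)=Irr(D_0)$ already, since every irreducible of $D_0$ is a nonzero element of $K$ and hence a unit of the field $K$. So in fact no localization is needed at all: take $D=D_0=F[T]$. Then (1) $K$ is algebraic over $D$; (2) $|Irr(D)|=|D|=|K|$; (3) every element of $Irr(D)$ is a unit in $K$, so $|U(K)\cap Irr(D)|=|K|$, and by Theorem \ref{t5}, $K$ has at least $|K|$-many maximal subrings up to $D$-algebra isomorphism.

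The main obstacle is really the bookkeeping in step (2): verifying $|Irr(D)|=|K|$. One must check, first, that a polynomial ring over a field in a nonempty set of variables has exactly as many non-associate irreducibles as its own cardinality (for the linear polynomials $X-c$, $c\in F$, plus all the $X_i$ themselves, give at least $\max(|F|,|T|)$, and there cannot be more than $|D|$ of anything); and second, that $|D|=|K|$, which follows from $K$ being algebraic over the fraction field of $D$ together with the cardinal arithmetic $|F[T]| = \max(|F|,|T|,\aleph_0) = |F(T)| = |\overline{F(T)}\cap K\text{-closure}|$. I would be slightly careful to note the case $T=\emptyset$ can only occur when $F=\mathbb{Q}$ (since $K$ is not absolutely algebraic), in which case $D=\mathbb{Q}$, which is \emph{not} what we want because $\mathbb{Q}$ has no irreducibles; so when $T=\emptyset$ take instead $D=\mathbb{Z}$, a UFD with $|Irr(\mathbb{Z})|=\aleph_0$, over which $\mathbb{Q}$ — hence $K$ — is algebraic, and note $|\mathbb{Q}|=\aleph_0$ but $|K|$ may be larger, so this sub-case genuinely needs a transcendental element; but if $T=\emptyset$ and $F=\mathbb{Q}$ then $K$ is algebraic over $\mathbb{Q}$, hence countable, so $|K|=\aleph_0=|Irr(\mathbb{Z})|$ and we are fine with $D=\mathbb{Z}$. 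So the final choice is: $D=F[T]$ if $T\neq\emptyset$, and $D=\mathbb{Z}$ if $T=\emptyset$ (forcing $F=\mathbb{Q}$).
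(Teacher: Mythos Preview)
Your approach---take a polynomial ring in a transcendence basis and invoke Theorem~\ref{t5}---is correct and is essentially what the paper does, except that the paper chooses $D=\Z[X]$ with $\Z$ the prime \emph{subring} (so $\mathbb{Z}$ in characteristic zero, $\mathbb{F}_p$ otherwise) rather than the prime subfield; this is already a non-field UFD even when $X=\emptyset$, so your case split and the self-correction about ``positive transcendence degree'' disappear. One caution: your aside that ``a UFD that is not a field has as many irreducibles as its cardinality (when infinite)'' is false in general (any DVR is a counterexample), though the direct count you give for polynomial rings is valid and suffices here.
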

\begin{proof}
Let $\Z$ be the prime subring of $K$ and $X$ be a transcendence base for $K$ over $\Z$. Now it suffices to take $D=\Z[X]$. It is clear that $K$ is algebraic over $D$ and therefore  $|D|=|K|$. This immediately implies that $|Irr(D)|=|K|$. Now we are done by Theorem \ref{t5}.
\end{proof}

Next we give some results for commutative rings.

\begin{thm}\label{t7}
Let $R$ be a commutative ring with zero characteristic. Then either $R$ has infinitely many maximal subrings up to isomorphism or $U(R)\cap \mathbb{P}$ is finite.
\end{thm}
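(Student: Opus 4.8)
The plan is to argue by contraposition: assuming $U(R)\cap\mathbb{P}$ is infinite, I will produce infinitely many pairwise non-isomorphic maximal subrings of $R$. The key tool is the construction recalled at the end of Section 2 (from the proof of \cite[Theorem 1.3]{azomn}), applied with $D=\Z$ the prime subring of $R$. Since $Char(R)=0$, the prime subring $\Z$ is isomorphic to $\mathbb{Z}$, hence is a UFD whose set of non-associate irreducibles is (a copy of) $\mathbb{P}$. Therefore $U(R)\cap Irr(\Z)$ corresponds exactly to $U(R)\cap\mathbb{P}$, which we are assuming is infinite. Now Theorem \ref{t5}, applied with this $D$, already gives that $R$ has at least $|U(R)\cap\mathbb{P}|$-many maximal subrings up to $\Z$-algebra isomorphism.

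The remaining point — and the only real subtlety — is to upgrade "$\Z$-algebra isomorphism" to plain "isomorphism", i.e. to show the $V_p$ constructed there are pairwise non-isomorphic as rings, not merely as $\Z$-algebras. This is immediate, however, because $Char(R)=0$ forces any ring homomorphism between subrings of $R$ (each of which contains $\Z\cong\mathbb{Z}$) to fix $\Z$ pointwise: a unital ring homomorphism always sends $n\cdot 1$ to $n\cdot 1$. So every ring isomorphism $V_p\to V_q$ is automatically a $\Z$-algebra isomorphism, and the argument of Theorem \ref{t5} — namely that such a map would send the prime $q$, invertible in $V_q$, to itself, contradicting $1/q\notin V_q$ — applies verbatim. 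Hence distinct primes $p,q\in U(R)\cap\mathbb{P}$ yield non-isomorphic maximal subrings $V_p\ne V_q$, and if $U(R)\cap\mathbb{P}$ is infinite then $R$ has infinitely many maximal subrings up to isomorphism.

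I expect no genuine obstacle here; the whole content is the reduction just described. The one place to be careful is verifying that the construction from \cite[Theorem 1.3]{azomn} really does deliver, for each prime $p$ invertible in $R$, a maximal subring $V_p$ with $1/p\notin V_p$ (equivalently $p\notin U(V_p)$) while keeping every other prime $q\in U(R)\cap\mathbb{P}$ invertible in $V_p$ — but this is exactly the statement recalled at the end of Section 2, so it may be cited directly. As a closing remark one can note the two corollaries flagged in the introduction: if $R\supseteq\mathbb{Q}$ then every prime is a unit, so $U(R)\cap\mathbb{P}=\mathbb{P}$ is infinite; and if $Char(R/J(R))\ne 0$, say $Char(R/J(R))=n$, then every prime $p\nmid n$ is invertible in $R$ (since $p$ is a unit mod $J(R)$, hence a unit in $R$), so again $U(R)\cap\mathbb{P}$ is infinite. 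In either case Theorem \ref{t7} gives infinitely many maximal subrings up to isomorphism.
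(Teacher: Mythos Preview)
Your proof is correct and follows essentially the same route as the paper: assume infinitely many primes are units in $R$, invoke the construction from \cite[Theorem 1.3]{azomn} (equivalently Theorem \ref{t5} with $D=\Z\cong\mathbb{Z}$) to obtain the maximal subrings $V_p$, and then observe that any ring isomorphism between them is automatically a $\mathbb{Z}$-algebra isomorphism, so the non-isomorphism argument of Theorem \ref{t5} applies unchanged. One tiny wording slip: in your parenthetical you should say ``$q$, invertible in $V_p$'' (not $V_q$), since the point is that $q\in U(V_p)$ forces $\phi(q)=q\in U(V_q)$, contradicting $1/q\notin V_q$.
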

\begin{proof}
Assume that $q_1, q_2,\ldots$ be infinitely many prime numbers which are invertible in $R$. Now similar to the proof of Theorem \ref{t5}, for each $i$, $R$ has a maximal subring $V_i$ such that all $q_j$, except $q_i$, are invertible in $V_i$. Now since each ring isomorphism is a $\mathbb{Z}$-algebra isomorphism, we immediately conclude that $V_i$ and $V_j$ are not isomorphic. Hence $R$ has infinitely many maximal subrings up to isomorphism.
\end{proof}

\begin{cor}\label{t8}
Let $R$ be a ring, which contains $\mathbb{Q}$, then $R$ has infinitely many maximal subrings up to isomorphism.
\end{cor}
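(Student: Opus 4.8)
The plan is to deduce this as an immediate consequence of Theorem \ref{t7}. The key observation is that containing $\mathbb{Q}$ forces two things simultaneously: first, that $Char(R)=0$, since the prime subring $\Z$ of $R$ is a subring of $\mathbb{Q}$ and hence no nonzero integer multiple of $1_R$ vanishes; and second, that \emph{every} natural prime number is a unit in $R$, because for each $p\in\mathbb{P}$ the rational number $1/p$ already lies in $\mathbb{Q}\subseteq R$ and is a two-sided inverse of $p\cdot 1_R$ in $R$.

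From these two facts one reads off directly that $U(R)\cap\mathbb{P}=\mathbb{P}$, which is infinite. Now apply Theorem \ref{t7}: for a commutative ring $R$ with zero characteristic, either $R$ has infinitely many maximal subrings up to isomorphism, or $U(R)\cap\mathbb{P}$ is finite. Since we have just shown the second alternative fails, the first must hold, and we are done.

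I do not expect any genuine obstacle here; the entire content is packaged in Theorem \ref{t7}, and the only thing to check is the routine verification that $\mathbb{Q}\subseteq R$ yields $Char(R)=0$ together with $\mathbb{P}\subseteq U(R)$. If one prefers a self-contained argument not quoting Theorem \ref{t7} verbatim, one can instead invoke the construction recalled in the proof of Theorem \ref{t5} (equivalently \cite[Theorem 1.3]{azomn}) with $D=\Z$ a UFD: for each prime $p$, since $p\in U(R)$, there is a maximal subring $V_p$ of $R$, integrally closed in $R$, with $1/p\notin V_p$ but $1/q\in V_p$ for every prime $q\neq p$; any ring isomorphism $V_p\to V_q$ is automatically a $\Z$-algebra map and would send the unit $q$ to a unit, a contradiction, so the $V_p$ are pairwise non-isomorphic. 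Either way the conclusion follows.
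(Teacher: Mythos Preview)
Your proposal is correct and essentially matches the paper. The paper's one-line proof puts $D=\mathbb{Z}$ in Theorem~\ref{t5}, which is exactly the alternative you spell out in your final paragraph; your primary route via Theorem~\ref{t7} is the same argument one step removed, since Theorem~\ref{t7} is itself just the $D=\mathbb{Z}$ instance of that construction.
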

\begin{proof}
It suffices to put $D=\mathbb{Z}$ in Theorem \ref{t5}.
\end{proof}

\begin{cor}\label{t9}
Let $R$ be a ring with zero characteristic and $Char(R/J(R))\neq 0$. Then $R$ has infinitely many maximal subrings up to isomorphism.
\end{cor}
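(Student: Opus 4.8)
\textit{Proof proposal.} The plan is to reduce everything to Theorem~\ref{t7}. Since $Char(R)=0$, that theorem applies to $R$, so it suffices to show that $U(R)\cap\mathbb{P}$ is infinite: then the alternative "$U(R)\cap\mathbb{P}$ is finite'' fails, and Theorem~\ref{t7} forces $R$ to have infinitely many maximal subrings up to isomorphism.

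Write $n=Char(R/J(R))$, a positive integer by hypothesis; note $n\geq 2$, since $J(R)$ is a proper ideal so $R/J(R)\neq 0$. The key elementary observation is that $m\cdot 1_R\in J(R)$ if and only if $n\mid m$; equivalently, the prime subring of $R/J(R)$ is isomorphic to $\mathbb{Z}/n\mathbb{Z}$. Now fix any prime $p$ with $p\nmid n$ — there are infinitely many such $p$, as only finitely many primes divide $n$. Since $\gcd(p,n)=1$, the element $p\cdot 1$ is already invertible inside the prime subring $\mathbb{Z}/n\mathbb{Z}$ of $R/J(R)$, hence $p\cdot 1_R+J(R)\in U(R/J(R))$.

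The remaining ingredient is the standard fact that units lift modulo the Jacobson radical: if $a\in R$ and $a+J(R)$ is a unit of $R/J(R)$, then $a\in U(R)$. Indeed, picking $b\in R$ with $ab-1\in J(R)$, we get $ab=1-j$ for some $j\in J(R)$, and $1-j\in U(R)$ by the characterization of $J(R)$; hence $a$ divides a unit and is therefore a unit. Applying this with $a=p\cdot 1_R$ shows $p\cdot 1_R\in U(R)$ for every prime $p\nmid n$. Thus $U(R)\cap\mathbb{P}$ is infinite, and by Theorem~\ref{t7} the ring $R$ has infinitely many maximal subrings up to isomorphism.

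I do not anticipate a real obstacle here; the only points that need a sentence of justification are the description of $\{m\in\mathbb{Z}: m\cdot 1_R\in J(R)\}$ in terms of $n=Char(R/J(R))$, and the lifting of units modulo $J(R)$, both of which are routine ring theory.
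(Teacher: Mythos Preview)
Your proof is correct and follows the same overall strategy as the paper: reduce to Theorem~\ref{t7} by showing that $U(R)\cap\mathbb{P}$ is infinite, using that $n\mathbb{Z}\subseteq J(R)$ where $n=Char(R/J(R))$.

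There is one small but worthwhile difference in execution. The paper simply observes that $1+n\mathbb{Z}\subseteq 1+J(R)\subseteq U(R)$ and then invokes the fact that $\mathbb{P}\cap(1+n\mathbb{Z})$ is infinite, which is Dirichlet's theorem on primes in arithmetic progressions. Your route instead notes that any prime $p\nmid n$ is already a unit in $\mathbb{Z}/n\mathbb{Z}$, hence a unit modulo $J(R)$, and then lifts the unit through $J(R)$; this shows \emph{every} prime not dividing $n$ lies in $U(R)$, and the infinitude follows from the trivial fact that only finitely many primes divide $n$. Your argument thus yields a slightly stronger intermediate conclusion and, more to the point, avoids the appeal to Dirichlet.
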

\begin{proof}
Since $R$ has zero characteristic we infer that $\mathbb{Z}\subseteq R$. Now assume that $Char(R/J(R))=n$. Therefore $nR\subseteq J(R)$ and hence $n\mathbb{Z}\subseteq J(R)$. This implies that $1+n\mathbb{Z}\subseteq U(R)$ and therefore we are done by Theorem \ref{t7} (note, it is clear that $\mathbb{P}\cap (1+n\mathbb{Z})$ is infinite).
\end{proof}

In the next results we investigate about maximal subrings up to isomorphism of $K[x]$, $K\times K$ and $K[x]/(x^2)$, where $K$ is a field.

\begin{thm}
Let $K$ be an algebraically closed field. Then maximal subrings of $K[x]$ which contains $K$ are isomorphic to $K+x(x-1)K[x]$ or $K+x^2K[x]$.
\end{thm}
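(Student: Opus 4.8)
The plan is to classify maximal subrings $R$ of $K[x]$ with $K \subseteq R$ by using the general theory of minimal ring extensions together with the very rigid structure of $K[x]$ as a PID over an algebraically closed field. First I would recall that since $R \subseteq K[x]$ is a minimal ring extension, $K[x]$ is integral over $R$ (by the argument already used in Proposition~\ref{tt2}: for $t \in K[x]$ either $t^2 \in R$ or $R[t^2] = K[x]$, so $K[x]$ is algebraic, hence integral, over $R$). In particular $R$ and $K[x]$ have the same quotient field $K(x)$ or $R$ is such that $\mathrm{qf}(R)$ is contained in it; since $K[x]$ is a finitely generated $R$-module and $R$ contains $K$, a dimension/transcendence-degree count forces $\mathrm{tr.deg}_K \mathrm{qf}(R) = 1$, and combined with L\"uroth one gets that $R$ contains a copy of a polynomial ring $K[u]$. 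The key structural input is the theory of the \emph{conductor} $\mathfrak{c} = (R : K[x]) = \{a \in K[x] : aK[x] \subseteq R\}$, which is the largest ideal of $K[x]$ contained in $R$; by Ferrand--Olivier, for a minimal extension $R \subsetneq K[x]$ the conductor $\mathfrak{c}$ is a maximal ideal of $K[x]$, say $\mathfrak{c} = (x-a)K[x]$ for some $a \in K$ (here algebraic closedness of $K$ is used to say every maximal ideal is of this linear form), and $R/\mathfrak{c} \subseteq K[x]/\mathfrak{c} = K$ is a minimal ring extension of the residue \emph{rings}.

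Next I would analyze the possibilities for $R/\mathfrak{c} \subseteq K[x]/\mathfrak{c}$. Since $K[x]/\mathfrak{c} \cong K$ is a field and $R/\mathfrak{c}$ is a subring through which $R/\mathfrak{c} \subseteq K$ is a minimal ring extension, Ferrand--Olivier's trichotomy (subfield with minimal field extension; $F \times F$; $F[t]/(t^2)$) applies to $R/\mathfrak{c}$. The crucial point is that $R \supseteq K$, so $R/\mathfrak{c}$ is a $K$-subalgebra of $K[x]/\mathfrak{c} \cong K$ — but that would force $R/\mathfrak{c} = K$, i.e. $R = K[x]$, a contradiction. The resolution: $\mathfrak{c}$ need not contain $K$ after the quotient in a way that keeps $K$ embedded — more precisely, the map $K \to K[x]/\mathfrak{c} \cong K$ is an isomorphism, and $R/\mathfrak{c}$ contains the image of $K$, hence equals it, hence $R = K[x]$. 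So in fact the conductor cannot be a maximal ideal of $K[x]$ \emph{with $K \subseteq R$ of this naive type}; instead one must take the conductor to be $\mathfrak{c} = (x-a)(x-b)K[x]$ with $a \neq b$, or $\mathfrak{c} = (x-a)^2 K[x]$, and then $K[x]/\mathfrak{c}$ is a $2$-dimensional $K$-algebra, either $K \times K$ or $K[t]/(t^2)$, and $R/\mathfrak{c}$ is the unique (up to the action of $\mathrm{Aut}$) maximal $K$-subalgebra of dimension $1$, namely the diagonal $K$ in $K \times K$ or $K$ inside $K[t]/(t^2)$. Translating back: $R = K + (x-a)(x-b)K[x]$ or $R = K + (x-a)^2 K[x]$.

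Finally I would remove the dependence on $a, b$ up to isomorphism: the automorphism $x \mapsto x - a$ of $K[x]$ carries $K + (x-a)^2 K[x]$ to $K + x^2 K[x]$, and an affine automorphism $x \mapsto \frac{x-a}{b-a}$ (using that $K$ is a field and $a \neq b$) carries $K + (x-a)(x-b)K[x]$ to $K + x(x-1)K[x]$. Hence every maximal subring of $K[x]$ containing $K$ is isomorphic to one of the two stated rings; conversely both of these are genuinely maximal subrings (each sits in $K[x]$ with conductor a codimension-$2$ ideal and residue extension one of the two non-field minimal extensions of Ferrand--Olivier), and they are not isomorphic to each other since one has reduced residue ring at the conductor and the other does not. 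The main obstacle I anticipate is pinning down exactly why the conductor must have codimension $2$ (not $1$) once $K \subseteq R$ — this is the place where the hypothesis $K \subseteq R$ genuinely bites, ruling out the "field" case of the trichotomy and forcing $R$ to be non-reduced-modulo-conductor in a controlled way — and verifying carefully, via the Ferrand--Olivier correspondence between $R$ and the pair $(\mathfrak c, R/\mathfrak c \subseteq K[x]/\mathfrak c)$, that no other $K$-subalgebras of $K[x]$ arise as maximal subrings.
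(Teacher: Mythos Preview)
Your overall strategy coincides with the paper's: show $K[x]$ is integral over $R$, invoke a conductor trichotomy (the paper via \cite[Theorem~2.8]{gilmer3}, you via Ferrand--Olivier), rule out the case where a single maximal ideal of $K[x]$ already lies in $R$ using $K\subseteq R$, and normalize the two surviving shapes $K+(x-a)^2K[x]$ and $K+(x-a)(x-b)K[x]$ by an affine substitution.

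There is, however, a genuine misstatement at the heart of your argument. Ferrand--Olivier does \emph{not} assert that the conductor $\mathfrak{c}=(R:K[x])$ is a maximal ideal of $K[x]$; it asserts that $\mathfrak{c}$ is a maximal ideal of $R$, and that $R/\mathfrak{c}\subsetneq K[x]/\mathfrak{c}$ is a minimal ring extension of the \emph{field} $R/\mathfrak{c}$. Your text first asserts the wrong thing, derives a contradiction from it, and then jumps to ``so the conductor must be $(x-a)(x-b)$ or $(x-a)^2$'' without ever establishing the trichotomy --- which is precisely the ``main obstacle'' you flag at the end. The clean fix is to run the logic in the correct order: from $K\subseteq R$ and $K$ algebraically closed one gets that the field $R/\mathfrak{c}$, being algebraic over $K$ (it sits inside the Artinian $K$-algebra $K[x]/\mathfrak{c}$), equals $K$; then the Ferrand--Olivier classification of minimal extensions of the algebraically closed field $K$ leaves only $K\times K$ and $K[t]/(t^2)$ for $K[x]/\mathfrak{c}$, forcing $\mathfrak{c}=(x-a)(x-b)K[x]$ or $(x-a)^2K[x]$. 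With that correction your proof and the paper's are essentially identical.
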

\begin{proof}
Assume that $K$ is algebraically closed and $R$ be a maximal subring of $K[x]$ which contains $K$. As we mentioned in the proof of Proposition \ref{tt2}, $K[X]$ is algebraic over $R$ and therefore $K\subsetneq R$. Now by \cite[Lemma 4.6]{azarang6}, we infer that $K[x]$ is integral over $R$. Therefore by \cite[Theorem 2.8]{gilmer3}, one of the following holds:
\begin{enumerate}
\item There exists a maximal ideal $M$ of $K[x]$ such that $M\subseteq R$. Since $K$ is algebraically closed we conclude that $M=(x-a)$ for some $a\in K$. Now note that clearly $K[x]=K+(x-a)$. Since $K\subset R$, we deduce that $R=K[x]$ which is impossible.
\item There exists a maximal ideal $M$ of $K[x]$ such that $M^2\subseteq R$. Since $K$ is algebraically closed we conclude that $M=(x-a)$ for some $a\in K$. Since $K\subset R$, we deduce that $K+(x-a)^2\subseteq R$. Now it is not hard to see that $K+(x-a)^2$ is a maximal subring of $K[x]$, see \cite{azarang}. Therefore $R=K+(x-a)^2$. Finally note that $R=K+(x-a)^2K[x]=K+(x-a)^2K[x-a]\cong K+t^2K[t]$ and hence we are done in this case.
\item There exist distinct maximal ideals $M_1$ and $M_2$ of $K[X]$ such that $M_1\cap M_2\subseteq R$. Since $K$ is algebraically closed we infer that $M_1=(x-a)$ and $M_2=(x-b)$ for some $a,b\in K$ and $a\neq b$. Similar to the previous part we conclude that $R=K+(x-a)(x-b)$. Finally note that
    $$R=K+(x-a)(x-b)K[x]=K+(X)(X+a-b)K[X]=K+(X)(\frac{1}{b-a}((b-a)X-1)K[X]$$
    $$=K+(\frac{Y}{b-a})(\frac{1}{b-a}(Y-1))K[Y]\cong K+t(t-1)K[t].$$

\end{enumerate}
\end{proof}

\begin{prop}\label{tt3}
Let $K$ be a field. Then $K$ has finitely many maximal subrings up to isomorphism if and only if $K\times K$ has finitely many maximal subrings up to isomorphism.
\end{prop}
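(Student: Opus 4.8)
The plan is to invoke the known description of maximal subrings of $K \times K$ and match it against the maximal subrings of $K$, so that the counting on both sides is controlled by the same data. Recall from \cite[Theorem 3.4]{azomn} that every maximal subring of $K \times K$ arises in one of two ways: either it is of the form $R \times K$ or $K \times R$ where $R$ is a maximal subring of $K$, or it is the pullback $\{(a,b) \in K \times K \mid \sigma(a) = b\}$ determined by an automorphism $\sigma$ of $K$ (the "diagonal-type" subring), or — in the remaining case — a subring sitting over a common maximal subring via an isomorphism of the two residue structures. The first step is to write down this classification precisely as it appears in \cite{azomn}, isolating the two families: the \emph{split} maximal subrings built from a maximal subring of $K$ in one coordinate, and the \emph{diagonal} maximal subrings built from automorphisms/isomorphisms.

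Second, I would analyze the isomorphism classes of each family. For the diagonal-type subrings, each such subring is itself isomorphic to $K$ (the projection onto one coordinate is an isomorphism), so all of them collapse to a single isomorphism class; in particular they contribute only finitely many (indeed one) isomorphism classes regardless of $K$. For the split subrings $R \times K$ (and symmetrically $K \times R$), two such are isomorphic iff the corresponding maximal subrings of $K$ are isomorphic — the direct factor $K$ being recoverable, e.g., as the unique field factor or via idempotents, so that an isomorphism $R_1 \times K \to R_2 \times K$ must carry the non-field factor to the non-field factor and hence restrict to an isomorphism $R_1 \cong R_2$ (here one uses that a maximal subring of a field need not be a field, but the structural invariants — which component contains the nonzero idempotent complement — pin down the factors). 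Thus the number of isomorphism classes of split maximal subrings of $K \times K$ equals the number of isomorphism classes of maximal subrings of $K$.

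Combining, the number of maximal subrings of $K \times K$ up to isomorphism is (number of isomorphism classes of maximal subrings of $K$) plus a bounded contribution from the diagonal family. Hence one side is finite iff the other is, which is exactly the claimed equivalence. The forward direction ($K$ finite $\Rightarrow$ $K\times K$ finite) is then immediate, and the reverse direction follows because the split family injects the isomorphism classes of maximal subrings of $K$ into those of $K \times K$.

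The main obstacle I anticipate is the bookkeeping in the second step: verifying that an abstract ring isomorphism $R_1 \times K \cong R_2 \times K$ really does force $R_1 \cong R_2$, since $R_1$ and $R_2$ may themselves be valuation domains rather than fields, and one must argue that the idempotent structure (the two primitive idempotents $(1,0)$ and $(0,1)$, up to the automorphisms of $R_i \times K$) together with the fact that exactly one factor is a field lets one recover the factors canonically; one also has to be careful that when $K$ itself has a maximal subring that is a field $E$ with $K/E$ minimal, the product $E \times K$ is not confused with a diagonal-type subring. Handling this case distinction cleanly — essentially quoting the precise form of \cite[Theorem 3.4]{azomn} and tracking which maximal subrings are integrally closed versus which are fields — is where the care is needed, but it is routine once the classification is on the table.
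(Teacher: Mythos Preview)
Your proposal is correct and follows the same route as the paper: both invoke \cite[Theorem 3.4]{azomn} to classify the maximal subrings of $K\times K$ as either $S\times K$, $K\times S$ (with $S$ a maximal subring of $K$), or diagonal-type subrings isomorphic to $K$, and then count isomorphism classes on each side. The paper is in fact terser than you are---it does not spell out the idempotent argument you flag for the converse (that $S_1\times K\cong S_2\times K$ forces $S_1\cong S_2$), simply calling it ``immediate and similar''; your caution there is well-placed but, as you suspect, the two primitive idempotents make it routine.
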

\begin{proof}
Note that by \cite[Theorem 3.4]{azomn}, $R$ is a maximal subring of $K\times K$ if and only if $R=S\times K$, or $K\times S$, where $S$ is a maximal subring of $K$, or $R=\{(\sigma_1(x),\sigma_2(x))\ |\ x\in K\}$, where $\sigma_i$ are field automorphism of $K$ for $i=1,2$, also in this case in fact $R\cong K$, see the proof of \cite[Theorem 3.4]{azomn}. Hence if $K$ has only finitely many maximal subrings up to isomorphism, say $S_1,\ldots, S_n$, then we immediately conclude that $K, S_1\times K,\ldots S_n\times K$ are all of maximal subrings up to isomorphism of $K\times K$ and therefore $K\times K$ has only finitely many maximal subrings up to isomorphism. The converse is immediate and similar.
\end{proof}

\begin{exm}
Let $p$ be a prime number and $\bar{\mathbb{F}}_p$ be the algebraic closure of $\mathbb{F}_p$. As mentioned in the introduction of this paper $\bar{\mathbb{F}}_p$ has no maximal subrings and therefore maximal subrings of $T:=\bar{\mathbb{F}}_p\times \bar{\mathbb{F}}_p$ is of the form $R=\{(\sigma_1(x),\sigma_2(x))\ |\ x\in K\}$, where $\sigma_i$ are field automorphism of $\bar{\mathbb{F}}_p$ for $i=1,2$ and as we mentioned in the previous proof each of them are isomorphic to $\bar{\mathbb{F}}_p$. Finally note that since $Aut(\bar{\mathbb{F}}_p)$ is uncountable (see \cite{bra}) and for each $\sigma\neq \tau$ in $Aut(\bar{\mathbb{F}}_p)$, $R_{\sigma}:=\{(x,\sigma(x))\ |\ x\in \bar{\mathbb{F}}_p \}\neq R_{\tau}:=\{(x,\tau(x))\ |\ x\in \bar{\mathbb{F}}_p \}$ are maximal subrings of $T$, thus we conclude that $T$ has uncountably many maximal subrings all of them are isomorphic to $\bar{\mathbb{F}}_p$.
\end{exm}

\begin{thm}\label{tt4}
Let $K$ be a field where each of maximal subrings of $K$ is not a field. If $K$ has only finitely many maximal subrings up to isomorphism, then $K[x]/(x^2)$ has only finitely many maximal subrings up to isomorphism.
\end{thm}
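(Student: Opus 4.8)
The plan is to combine the explicit classification of the maximal subrings of $R:=K[x]/(x^2)$ with a bookkeeping argument on isomorphism classes. Write $N=\bar xR$ for the nilradical of $R$ (so $N^2=0$), identify $R/N$ with $K$, let $\pi:R\to K$ be the projection, and note that every element of $R$ is uniquely of the form $a+k\bar x$ with $a,k\in K$. First I would recall the dichotomy for a maximal subring $S$ of $R$ (it is part of \cite[Theorem 4.10]{azarang6}, and in any case follows quickly): if $\pi(S)\neq K$ then $S\subseteq\pi^{-1}(\pi(S))\subsetneq R$, so maximality forces $S=\pi^{-1}(S_0)$ for a maximal subring $S_0$ of $K$; while if $\pi(S)=K$ then $S\cap N$ is a $K$-submodule of the one-dimensional $K$-space $N$, and $S\cap N=N$ would give $S=R$, so $S\cap N=0$ and $\pi|_S:S\to K$ is a ring isomorphism. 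Thus the maximal subrings of $R$ fall into two families: copies of $K$, and the rings $\pi^{-1}(S_0)=S_0+N$ with $S_0$ a maximal subring of $K$.

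The first family consists of fields, so it contributes a single isomorphism class, that of $K$; the second consists of non-reduced rings, so the two families are disjoint. Hence everything reduces to bounding the number of isomorphism classes among the rings $\pi^{-1}(S_0)$. This is where the hypothesis is used: by \cite[Theorem 3.3]{azarang} a maximal subring $S_0$ of $K$ that is not a field is a one-dimensional valuation domain, and its quotient field is a subring of $K$ strictly containing $S_0$, hence equals $K$ by maximality; so under our hypothesis $K$ is the quotient field of every maximal subring of $K$.

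The key step is then the implication $S_0\cong S_0'\ \Longrightarrow\ \pi^{-1}(S_0)\cong\pi^{-1}(S_0')$ for maximal subrings $S_0,S_0'$ of $K$. Given a ring isomorphism $\phi:S_0\to S_0'$, I would extend it to quotient fields; since both quotient fields equal $K$, this produces a ring isomorphism $\widetilde\phi:K\to K$ satisfying $\widetilde\phi(sk)=\phi(s)\widetilde\phi(k)$ for all $s\in S_0$, $k\in K$. A direct check then shows that $s+k\bar x\mapsto\phi(s)+\widetilde\phi(k)\bar x$ is a well-defined ring isomorphism $\pi^{-1}(S_0)\to\pi^{-1}(S_0')$ — the displayed Leibniz-type identity is precisely what makes it multiplicative. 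Consequently $[S_0]\mapsto[\pi^{-1}(S_0)]$ is a well-defined surjection from the set of isomorphism classes of maximal subrings of $K$ (finite by hypothesis) onto the set of isomorphism classes of non-reduced maximal subrings of $R$. The latter set is therefore finite, and adding the class of $K$ shows that $R$ has only finitely many maximal subrings up to isomorphism.

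I expect the crux to be this last extension step, and in particular explaining cleanly why the assumption that no maximal subring of $K$ is a field cannot be dropped here: when $S_0$ is a field its quotient field is $S_0$ itself, not $K$; an abstract isomorphism $S_0\cong S_0'$ need not extend across the finite extension $K/S_0$; and the $S_0$- and $S_0'$-module structures carried by $N$ may genuinely differ, so $\pi^{-1}(S_0)$ and $\pi^{-1}(S_0')$ can fail to be isomorphic even when $S_0\cong S_0'$.
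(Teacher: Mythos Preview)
Your proposal is correct and follows essentially the same route as the paper's proof: both invoke the classification from \cite[Theorem~4.10]{azarang6} to split the maximal subrings of $K[x]/(x^2)$ into copies of $K$ and rings of the form $S_0+N$, then use the non-field hypothesis to extend an isomorphism $S_0\cong S_0'$ to an automorphism of $K$ (the paper via $A_u=K$, you via quotient fields---the same mechanism) and transport it to $S_0+N\cong S_0'+N$ by the identical formula $s+k\bar x\mapsto\phi(s)+\widetilde\phi(k)\bar x$.
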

\begin{proof}
We may assume that $T:=K[x]/(x^2)=K+K\alpha$, where $\alpha=x+(x^2)$ and therefore $\alpha^2=0$. Now by \cite[Theorem 4.10]{azarang6}, if $R$ is a maximal subring of $T$, then either $R=S+K\alpha$ or $R\cong K$ (see the proof of \cite[Theorem 4.10]{azarang6}). Hence we must prove that if $A$ and $B$ are two isomorphic maximal subrings of $K$, then $A+K\alpha$ and $B+K\alpha$ are isomorphic too. Hence assume that $f:A\rightarrow B$ be a ring isomorphism. Then clearly $f$ is a one-one homomorphism from $A$ to $K$. Since $A$ is not a field, we infer that there exists a nonzero $u\in A$ which is not invertible in $A$. Thus $A_u=A[\frac{1}{u}]=K$, and clearly $f(u)\neq 0$. Therefore $f(u)$ is invertible in $K$. Thus by \cite[Theorem 10.3.1]{cohn}, we can extended $f$ naturally to $\bar{f}$ from $K$ to $K$ (in fact $\bar{f}$ is an automorphism of $K$, since clearly $\bar{f}$ is one-one, for $K$ is a field and $f$ and therefore $\bar{f}$ are nonzero. Finally note that $B=f(A)\subseteq Im(\bar{f})$, now since $B$ is a maximal subring of $K$, we infer that either $K\cong Im(\bar{f})=B$ which is a contradiction for $B$ is not a field, or $Im(\bar{f})=K$, i.e., $\bar{f}$ is an automorphism of $K$). Now one can easily check that the map $\phi: A+K\alpha\rightarrow B+K\alpha$ which is define by $\phi(a+k\alpha)=\bar{f}(a)+\bar{f}(k)\alpha$ is a ring isomorphism and hence we are done.
\end{proof}

\begin{prop}\label{tt5}
Let $K$ be a field and $R$ be a maximal subring of $K$ which is not a field. Assume that $\mathcal{A}$ is the set of all maximal subrings of $K$ which isomorphic to $R$. Then $|Aut(K)|\geq |\mathcal{A}|$ and $\mathcal{A}=\{\sigma(R)\ |\ \sigma\in Aut(K)\}$. In particular, if $|Aut(K)|=1$, then for each two distinct non-field maximal subrings $S$ and $R$ of $K$, we have $R\ncong S$.
\end{prop}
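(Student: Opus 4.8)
The plan is to reuse, almost verbatim, the extension-of-homomorphism device already exploited in the proof of Theorem~\ref{tt4}. First I would record the structural facts that make the argument run. Since $R$ is a maximal subring of the field $K$ which is not a field, by \cite[Theorem 3.3]{azarang} it is a one dimensional valuation domain with quotient field $K$; in any case $R$ is a domain that is not a field, so it has a nonzero non-unit $u$, whence $1/u\notin R$ and the subring $R[1/u]$ of $K$ properly contains $R$. Maximality of $R$ then forces $R[1/u]=K$. The same remarks apply to every $S\in\mathcal{A}$: being isomorphic to $R$, such an $S$ is a maximal subring of $K$ that is again a domain and not a field.

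Next I would prove the inclusion $\mathcal{A}\subseteq\{\sigma(R)\mid\sigma\in Aut(K)\}$. Fix $S\in\mathcal{A}$, a ring isomorphism $f\colon R\to S\subseteq K$, and a nonzero non-unit $u\in R$ as above. Then $f(u)\neq 0$, hence $f(u)$ is a unit of $K$, so by the universal property of localization, \cite[Theorem 10.3.1]{cohn}, $f$ extends to a ring homomorphism $\bar f\colon R[1/u]=K\to K$. Because $K$ is a field and $\bar f\neq 0$, $\bar f$ is injective, and $Im(\bar f)$ is a subring of $K$ containing $S=f(R)$. Since $S$ is a maximal subring of $K$, either $Im(\bar f)=S$ — impossible, as it would force $S\cong K$ to be a field — or $Im(\bar f)=K$. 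Thus $\bar f\in Aut(K)$ and $\bar f(R)=f(R)=S$, so $S=\sigma(R)$ with $\sigma=\bar f$.

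For the reverse inclusion, for any $\sigma\in Aut(K)$ the set $\sigma(R)$ is a subring of $K$ isomorphic to $R$, and it is a maximal subring of $K$ because $\sigma$ is an automorphism of $K$; hence $\sigma(R)\in\mathcal{A}$. Combining the two inclusions gives $\mathcal{A}=\{\sigma(R)\mid\sigma\in Aut(K)\}$, and the map $\sigma\mapsto\sigma(R)$ is a surjection from $Aut(K)$ onto $\mathcal{A}$, so $|Aut(K)|\geq|\mathcal{A}|$. The final assertion is then immediate: if $|Aut(K)|=1$ then $\mathcal{A}=\{R\}$, so a maximal subring $S$ of $K$ with $S\neq R$ — in particular any non-field such $S$ — cannot be isomorphic to $R$.

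The only genuinely delicate point I anticipate is the passage from $f$ to $\bar f$ together with the dichotomy on $Im(\bar f)$: one must be sure that inverting a single non-unit of $R$ already yields all of $K$ (so that $\bar f$ is defined on the whole of $K$, not on a proper intermediate ring), and that "$S$ is not a field" genuinely excludes the case $Im(\bar f)=S$. Both are settled by the maximality of $R$ and of $S$ inside $K$ together with the fact that a non-field subring of a field is a domain with a nonzero non-unit; the rest is bookkeeping.
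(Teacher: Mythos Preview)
Your proof is correct and follows essentially the same approach as the paper: both extend an isomorphism $f\colon R\to S$ to an automorphism $\bar f$ of $K$ via the localization $R[1/u]=K$ (with the same dichotomy on $Im(\bar f)$), and both check the reverse inclusion by observing that $\sigma(R)$ is again a maximal subring. The only cosmetic difference is that the paper deduces $|Aut(K)|\geq|\mathcal{A}|$ by noting the assignment $R_i\mapsto\bar f_i$ is injective, whereas you use the surjection $\sigma\mapsto\sigma(R)$; these are equivalent.
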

\begin{proof}Let $\mathcal{A}=\{R_i\ |\ i\in I\}$ where for $i\neq j$ in $I$, $R_i\neq R_j$. For each $i\in I$, let $f_i:R\rightarrow R_i$ be a ring isomorphism. Similar to the proof of the previous theorem let $\bar{f}_i:K\rightarrow K$ be the extension of $f_i$. As we see in the previous proof each $\bar{f}_i$ is an automorphism of $K$ and since $R_i\neq R_j$, for $i\neq j$ in $I$, we immediately conclude that $\bar{f}_i\neq \bar{f}_j$ too. Thus $|Aut(K)|\geq |I|=|\mathcal{A}|$. Hence for each $i\in I$, $R_i=\bar{f}_i(R)$, where $\bar{f}_i\in Aut(K)$. Therefore $\mathcal{A}\subseteq \{\sigma(R)\ |\ \sigma\in Aut(K)\}$. Conversely, assume that $\sigma\in Aut(K)$, we show that $\sigma(R)\in \mathcal{A}$. It is clear that $R\cong \sigma(R)$, for $\sigma\in Aut(R)$. Hence we must prove that $\sigma(R)$ is a maximal subring of $K$. Since $R$ is not a field we conclude that $R$ is a one dimensional valuation domain. From $\sigma\in Aut(K)$, one can easily see that $\sigma(R)$ is a valuation for $K$ too and since $R\cong \sigma(R)$ we deduce that $\sigma(R)$ is a one dimensional too and therefore is a maximal subring of $K$. Thus the equality holds. The final part is evident.
\end{proof}

\begin{cor}\label{tt7}
Two distinct maximal subrings of $\mathbb{R}$ are not isomorphic.
\end{cor}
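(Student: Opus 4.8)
The plan is to deduce this from the last clause of Proposition~\ref{tt5}, which asserts that when $\abs{Aut(K)}=1$ no two distinct \emph{non-field} maximal subrings of $K$ are isomorphic. So the argument splits into two classical inputs: first, that $\abs{Aut(\mathbb{R})}=1$; and second, that \emph{every} maximal subring of $\mathbb{R}$ is a non-field, equivalently a one-dimensional valuation domain. Granting both, two distinct maximal subrings $R\neq S$ of $\mathbb{R}$ are automatically distinct non-field maximal subrings, and Proposition~\ref{tt5} gives $R\ncong S$.

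For the first input I would use the standard order argument: for $\sigma\in Aut(\mathbb{R})$ one has $\sigma|_{\mathbb{Q}}=\mathrm{id}$, and since in $\mathbb{R}$ one has $a\leq b$ iff $b-a$ is a square, $\sigma$ (being a bijection that carries squares to squares and back) preserves the order; a monotone self-map of $\mathbb{R}$ that fixes the dense subfield $\mathbb{Q}$ must be the identity, so $Aut(\mathbb{R})=\{\mathrm{id}\}$. For the second input, recall (as noted in the proof of Proposition~\ref{tt2}) that $\mathbb{R}$ is algebraic over each of its maximal subrings, and that by \cite[Theorem 3.3]{azarang} a maximal subring of a field is either a one-dimensional valuation domain or itself a field $F$ with $\mathbb{R}/F$ a minimal finite field extension. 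In the latter case $F\subsetneq\mathbb{R}$ and, since $\mathbb{R}/F$ is algebraic, $\overline{F}=\overline{\mathbb{R}}=\mathbb{C}$, so $1<[\,\overline{F}:F\,]<\infty$; by the Artin--Schreier theorem $F$ is then real closed with $\overline{F}=F(\sqrt{-1})$, forcing $[\mathbb{C}:F]=2$ and hence $[\mathbb{R}:F]=1$, i.e. $F=\mathbb{R}$ --- a contradiction. Thus no maximal subring of $\mathbb{R}$ is a field, which is the second input.

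Combining the two inputs finishes the proof. The argument is genuinely short, and there is no serious obstacle once these two classical facts are in hand; the one point that needs attention is that Proposition~\ref{tt5} is phrased only for non-field maximal subrings, so one must actively exclude a field maximal subring of $\mathbb{R}$ --- this is precisely where the Artin--Schreier theorem (equivalently, the fact that the real closed field $\mathbb{R}$ has no proper subfield of finite index) enters.
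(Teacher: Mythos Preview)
Your proposal is correct and follows exactly the paper's approach: the paper invokes Proposition~\ref{tt5} after noting that $\abs{Aut(\mathbb{R})}=1$ (as a well-known fact) and that every maximal subring of $\mathbb{R}$ is a non-field (citing \cite[Remark~2.11]{azkrm}). The only difference is that you supply self-contained proofs of these two inputs (the order-preservation argument for the first, and Artin--Schreier for the second) rather than citing them; both arguments are standard and correct.
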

\begin{proof}
It is well known that the only ring homomorphism from $\mathbb{R}$ to $\mathbb{R}$ is the identity, therefore $|Aut(\mathbb{R})|=1$. Also note that by \cite[Remark 2.11]{azkrm}, each maximal subring of $\mathbb{R}$ is not a field. Thus we are done by the previous proposition.
\end{proof}

\begin{prop}\label{tt8}
Let $K$ be an infinite field with $|Aut(K)|<|K|$. Then $K$ has infinitely many maximal subrings up to isomorphism. In fact, $K$ has at least $|K|$-many non-isomorphic integrally closed maximal subrings.
\end{prop}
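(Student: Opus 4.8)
The plan is to combine Theorem~\ref{t5} (more precisely, the construction inside the proof of Proposition~\ref{t6}), which manufactures a large supply of pairwise distinct integrally closed maximal subrings of $K$, with Proposition~\ref{tt5}, by which every isomorphism class of non-field maximal subrings of $K$ is an $Aut(K)$-orbit and hence has at most $|Aut(K)|$ members; ``dividing'' the first quantity by the second will give the claim.

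First I would reduce to the case that $K$ is not absolutely algebraic. Indeed, if $K$ were infinite and absolutely algebraic, say of characteristic $q$, then $K$ would be countable, while the Frobenius $\phi\colon x\mapsto x^{q}$ would satisfy $\phi^{n}\neq\mathrm{id}_{K}$ for every $n\ge 1$ (otherwise $K\subseteq\mathbb{F}_{q^{n}}$ would be finite); hence $\langle\phi\rangle\subseteq Aut(K)$ is infinite, so $|Aut(K)|\ge\aleph_{0}=|K|$ and the hypothesis fails. Thus from now on $K$ is not absolutely algebraic.

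Next I would invoke Proposition~\ref{t6} to obtain a UFD subring $D\subseteq K$, over which $K$ is algebraic, with $|Irr(D)|=|K|$. Since $K$ is a field, $U(K)\cap Irr(D)=Irr(D)$, so $|U(K)\cap Irr(D)|=|K|$; then the proof of Theorem~\ref{t5}, applied with $R=K$, yields for each $p\in Irr(D)$ a maximal subring $V_{p}$ of $K$ that is integrally closed in $K$, has $1/p\notin V_{p}$, and satisfies $V_{p}\neq V_{q}$ whenever $p\neq q$. Each such $V_{p}$ is a one-dimensional valuation domain, in particular not a field (as $p\in V_{p}\setminus\{0\}$ while $1/p\notin V_{p}$), so $\{V_{p}:p\in Irr(D)\}$ is a set of $|K|$-many pairwise distinct integrally closed maximal subrings of $K$.

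Finally I would partition this family into isomorphism classes. By Proposition~\ref{tt5}, the set of \emph{all} maximal subrings of $K$ isomorphic to a fixed $V_{p}$ equals $\{\sigma(V_{p}):\sigma\in Aut(K)\}$, of cardinality at most $|Aut(K)|<|K|$, so each class in the partition has fewer than $|K|$ members. If there were fewer than $|K|$-many classes, then, $|K|$ being infinite, the family --- a union of fewer than $|K|$ sets each of cardinality $<|K|$ --- would itself have cardinality $<|K|$, contradicting that it has cardinality $|K|$. Hence there are at least $|K|$-many isomorphism classes, i.e.\ $K$ has at least $|K|$-many pairwise non-isomorphic integrally closed maximal subrings; since $|K|\ge\aleph_{0}$, this already gives infinitely many maximal subrings of $K$ up to isomorphism. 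The two points that need genuine care are the reduction in the second paragraph (verifying that an infinite absolutely algebraic field already violates $|Aut(K)|<|K|$, so that Proposition~\ref{t6} is applicable) and the infinite cardinal arithmetic in the last step, which is exactly where the hypothesis $|Aut(K)|<|K|$ is used; the remaining appeals are routine.
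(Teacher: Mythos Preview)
Your proposal is correct and follows essentially the same route as the paper: reduce to the non-absolutely-algebraic case, produce at least $|K|$ pairwise distinct integrally closed (hence non-field) maximal subrings of $K$, then partition into isomorphism classes, bound each class by $|Aut(K)|$ via Proposition~\ref{tt5}, and finish with the same cardinal-arithmetic step. The only cosmetic difference is that the paper obtains the $|K|$-many integrally closed maximal subrings by citing an external result (\cite[(2) of Theorem 1.3]{azqm}), whereas you construct them in-house via Proposition~\ref{t6} and the proof of Theorem~\ref{t5}; the paper in fact mentions this alternative in a parenthetical.
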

\begin{proof}
First we claim that $K$ is not an absolutely algebraic field. For otherwise, if $K$ is an absolutely algebraic field, then it is not hard to see that $Aut(K)$ is infinite (see the proof of \cite[Corollary 3.5]{azomn}) and therefore $|K|\leq |Aut(K)|$ which is absurd. Thus $K$ is not an absolutely algebraic field, therefore if $X$ is the set of all integrally closed maximal subrings of $K$, by \cite[(2) of Theorem 1.3]{azqm}, we conclude that $|X|\geq |K|$ (or see Corollary 1.5 and the proof of Theorem 1.3 of \cite{azomn}). Now assume that $\{R_i\ |\ i\in I\}$ be a complete set of all integrally closed maximal subrings up to isomorphism of $K$ (i.e., for each $i\neq j$ in $I$, $R_i\ncong R_j$ and for each $R\in X$, there exist $i\in I$ such that $R\cong R_i$). For each $i\in I$, let $X_i$ be the set of all maximal subrings of $K$ which are isomorphic to $R_i$. It is clear that $\{X_i\}_{i\in I}$ is a partition for $X$. By Corollary \ref{tt5}, for each $i\in I$, $|X_i|\leq |Aut(K)|$, therefore $|K|\leq |X|\leq |I||Aut(K)|$, which immediately shows that $|K|\leq |I|$ and hence we are done.
\end{proof}

\begin{cor}
Let $K$ be an infinite field with only finitely many integrally closed maximal subring up to isomorphism, then $|K|\leq |Aut(K)|$
\end{cor}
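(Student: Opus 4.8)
The plan is to observe that this statement is essentially the contrapositive of Proposition \ref{tt8}, so the proof is immediate once that proposition is in hand. Concretely, I would argue by contradiction: suppose $|Aut(K)|<|K|$. Since $K$ is infinite by hypothesis, Proposition \ref{tt8} applies verbatim and produces at least $|K|$-many pairwise non-isomorphic integrally closed maximal subrings of $K$; as $|K|\geq\aleph_0$, this is in particular infinitely many integrally closed maximal subrings up to isomorphism, contradicting the hypothesis. Hence $|K|\leq|Aut(K)|$.

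If one prefers to see the counting unwound rather than invoking the contrapositive, I would rerun the argument of Proposition \ref{tt8} directly. Let $R_1,\dots,R_n$ be representatives of the finitely many isomorphism classes of integrally closed maximal subrings of $K$, let $X$ be the set of all integrally closed maximal subrings of $K$, and partition $X=X_1\sqcup\cdots\sqcup X_n$ according to isomorphism class. Each $R_i$ is a one-dimensional valuation domain, hence not a field, so Proposition \ref{tt5} gives $|X_i|\leq|Aut(K)|$ and therefore $|X|\leq n\cdot|Aut(K)|$. If $K$ is not absolutely algebraic, then $|X|\geq|K|$ by \cite[(2) of Theorem 1.3]{azqm} (or by the proof of Theorem 1.3 of \cite{azomn}); since $K$ is infinite this forces $Aut(K)$ to be infinite and $|K|\leq n\cdot|Aut(K)|=|Aut(K)|$. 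If instead $K$ is absolutely algebraic and infinite, then $K$ is a countably infinite subfield of $\bar{\mathbb{F}}_p$ and $Aut(K)$ is infinite (as in the proof of \cite[Corollary 3.5]{azomn}), so again $|K|=\aleph_0\leq|Aut(K)|$.

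I do not anticipate a genuine obstacle here, since all the work was already done in Propositions \ref{tt5} and \ref{tt8}. The only point deserving a moment's care is the degenerate case in which $K$ has \emph{no} integrally closed maximal subrings at all, so that ``finitely many up to isomorphism'' holds vacuously; but this case is still covered, because the contrapositive of Proposition \ref{tt8} (equivalently, the absolutely-algebraic/not dichotomy above) yields $|K|\leq|Aut(K)|$ without ever assuming that set is nonempty.
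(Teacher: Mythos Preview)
Your proposal is correct and matches the paper's approach: the corollary is stated without proof in the paper, since it is precisely the contrapositive of Proposition~\ref{tt8}. Your unwound version simply replays that proposition's counting argument and adds nothing new, but is not wrong.
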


Hence by the notations of the proof of Theorem \ref{tt8}, if $K$ is a field which is not absolutely algebraic, then we have $|K|\leq |X|\leq |I||Aut(K)|$. Thus we have the following immediate corollaries.

\begin{cor}
Let $K$ be a field which is not absolutely algebraic and $|Aut(K)|=|K|$. If $K$ has exactly $\alpha$ integrally closed maximal subrings up to isomorphism where $\alpha<|X|$, then $|X|=|K|$, i.e., $K$ has exactly $|K|$-many integrally closed maximal subrings.
\end{cor}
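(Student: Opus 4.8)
The plan is to read the corollary off from the chain of cardinal inequalities
$|K|\le |X|\le |I|\,|Aut(K)|$ recorded in the paragraph preceding the statement (here $I$ indexes a complete set of integrally closed maximal subrings of $K$ up to isomorphism, so that $|I|=\alpha$ and $X$ is the set of \emph{all} integrally closed maximal subrings of $K$), by substituting the hypothesis $|Aut(K)|=|K|$ and then running a short case split on the size of $\alpha$.

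First I would pin down the two side conditions that make the cardinal arithmetic legitimate. Since a finite field is absolutely algebraic, the hypothesis that $K$ is not absolutely algebraic forces $K$ to be infinite, so $|K|$ is an infinite cardinal. Moreover a field that is not absolutely algebraic is submaximal and, by \cite[(2) of Theorem 1.3]{azqm} (equivalently Corollary 1.5 of \cite{azomn}), satisfies $|X|\ge|K|>0$; hence $X\neq\emptyset$ and $\alpha=|I|\ge 1$. Feeding $|Aut(K)|=|K|$ into the displayed chain then gives $|K|\le|X|\le\alpha\,|K|$.

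Now the case analysis. If $\alpha\le|K|$, then, because $1\le\alpha$ and $|K|$ is infinite, absorption gives $\alpha\,|K|=|K|$, so $|X|\le|K|$; together with $|K|\le|X|$ this yields $|X|=|K|$, which is exactly the assertion. If instead $\alpha>|K|$, then $\alpha$ is infinite and $\alpha\,|K|=\alpha$, so $|X|\le\alpha$; but this contradicts the standing hypothesis $\alpha<|X|$. Hence only the first case can occur, and $|X|=|K|$, i.e.\ $K$ has exactly $|K|$-many integrally closed maximal subrings.

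There is essentially no obstacle here beyond bookkeeping: the only hypotheses genuinely used, apart from $|Aut(K)|=|K|$, are that $K$ is not absolutely algebraic (for infiniteness of $|K|$ and nonemptiness of $X$) and that $\alpha<|X|$, the latter entering exactly once, to discard the possibility $\alpha>|K|$. The point I would be most careful about is applying the absorption law $\alpha\,|K|=\max(\alpha,|K|)$ only under the conditions that validate it ($|K|$ infinite, $\alpha\ge1$), which is precisely why the preliminary remarks deserve to be spelled out; the remainder of the argument is a direct quotation of the inequality established just above the statement.
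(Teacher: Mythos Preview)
Your argument is correct and is precisely the intended one: the paper leaves this corollary without proof, deriving it as ``immediate'' from the displayed chain $|K|\le |X|\le |I|\,|Aut(K)|$, and your case split on $\alpha$ versus $|K|$ is exactly the cardinal-arithmetic bookkeeping that makes this explicit. The preliminary remarks you include (infiniteness of $|K|$, nonemptiness of $X$) are appropriate hygiene for applying absorption and are consistent with how the paper sets things up in the proof of Proposition~\ref{tt8}.
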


\begin{cor}
Let $K$ be a field which is not absolutely algebraic, $|Aut(K)|=|K|$ and $|K|<|X|$. Then $K$ has $|X|$-many integrally closed maximal subrings up to isomorphism.
\end{cor}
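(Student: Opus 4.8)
The plan is to conclude directly from the chain of cardinal inequalities
\[
|K| \le |X| \le |I|\,|Aut(K)|
\]
recorded immediately before the statement, where $X$ denotes the set of all integrally closed maximal subrings of $K$ and $\{R_i \mid i \in I\}$ is a complete set of integrally closed maximal subrings of $K$ up to isomorphism (so that $|I|$ is exactly the quantity we want to compute). The bound $|K| \le |X|$ is (part of) the content used in Proposition~\ref{tt8}, and $|X| \le |I|\,|Aut(K)|$ comes from partitioning $X$ into the isomorphism classes $X_i$ and invoking $|X_i| \le |Aut(K)|$ from Proposition~\ref{tt5}.

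First I would record that $K$ is infinite: a finite field is algebraic over its prime subfield, hence absolutely algebraic, so the hypothesis that $K$ is not absolutely algebraic forces $|K| \ge \aleph_0$. Substituting $|Aut(K)| = |K|$ into the displayed chain gives $|K| \le |X| \le |I|\,|K|$.

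Then comes the cardinal arithmetic. Since $|K|$ is infinite, $|I|\,|K| = \max(|I|,|K|)$. If $|I| \le |K|$, then $|I|\,|K| = |K|$, so the chain yields $|X| \le |K|$, contradicting the hypothesis $|K| < |X|$. Hence $|I| > |K|$; in particular $|I|$ is infinite, so $|I|\,|K| = |I|$, and the chain collapses to $|X| \le |I|$. For the reverse inequality it suffices to note that the $R_i$ are pairwise non-isomorphic, hence pairwise distinct, elements of $X$, so $|I| \le |X|$. Combining the two gives $|I| = |X|$, i.e. $K$ has $|X|$-many integrally closed maximal subrings up to isomorphism.

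I do not anticipate a genuine obstacle: once the displayed chain is in hand, the rest is routine infinite cardinal arithmetic. The only points needing a moment's care are that $K$ is infinite (so that $|K|$ absorbs the product in the case $|I|\le|K|$) and the bookkeeping that $|I|$ is indeed the quantity named in the statement — both immediate. The substantive input, $|X_i| \le |Aut(K)|$, is already supplied by Proposition~\ref{tt5}.
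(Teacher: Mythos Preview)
Your proposal is correct and follows exactly the route the paper intends: the paper records the chain $|K|\le |X|\le |I|\,|Aut(K)|$ and declares the corollary immediate, and your argument is precisely the straightforward cardinal-arithmetic unpacking of that implication.
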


Finally we conclude this paper by the following remark.

\begin{rem}
Let $F\subseteq E$ be an extension of fields. Assume that $F$ has infinitely many non-isomorphic integrally closed maximal subrings. Then the natural question raised is that " does $E$ have infinitely many non-isomorphic integrally closed maximal subrings too?". More interestingly, assume that $V_1, V_2,\ldots$ be non-isomorphic non-field maximal subrings of $F$ and $W_1, W_2,\ldots$ be extension of them to maximal subrings of $E$ (i.e., $W_i\cap F=V_i$), respectively. Is it true that $W_i$'s are non-isomorphic too? The answer to both cases is negative (even if $E/F$ is algebraic). To see this let $p$ be a prime number and $F=\mathbb{F}_p(x)$ and $E=\bar{\mathbb{F}}_p(x)$, where $\bar{\mathbb{F}}_p$ is the algebraic closure of $\mathbb{F}_p$. Then by the proof of Theorem \ref{t4}, $F$ has infinitely many non-isomorphic maximal subrings but $E$ has a unique integrally maximal subrings up to isomorphism. Which is also shows that if $V_1$ and $V_2$ are non-isomorphic non-field maximal subrings of $F$ and $W_1$ and $W_2$ are extensions of them to non-field maximal subrings of $E$, then $W_1\cong W_2$.
\end{rem}





\centerline{\Large{\bf Acknowledgement}}
The first author is grateful to the Research Council of Shahid Chamran University of Ahvaz (Ahvaz-Iran) for
financial support (Grant Number: SCU.MM1402.721)


\end{document}